\newtheorem{example}{Example}[section]
\newtheorem{remark}{Remark}
\newtheorem{lemma}{Lemma}
\newtheorem{theorem}{Theorem}
\newtheorem{corollary}{Corollary}
\newtheorem{proof}{Proof}
\newtheorem{definition}{Definition}
\newcommand{\oprocendsymbol}{\hbox{$\bullet$}}
\newcommand{\oprocend}{\relax\ifmmode\else\unskip\hfill\fi\oprocendsymbol}
\begin{document}
\title{Generating Preferential Attachment Graphs \\ via a P\'{o}lya Urn with Expanding Colors}
\author{Somya Singh$^{1}$,
         Fady Alajaji$^{2}$ and
         Bahman Gharesifard$^{3}$}
  \renewcommand{\thefootnote}{\fnsymbol{footnote}}
  \footnotetext{\\
  $^1$ICTEAM Institute, UCL, Louvain-la-Neuve, Belgium, email:somya.singh@uclouvain.be \\
  $^2$Department of Mathematics and Statistics, Queen's University, Kingston, Canada \\
  $^3$Electrical and Computer Engineering Department, UCLA, Los Angeles, USA
  }
\maketitle

\begin{abstract}
We introduce a novel preferential attachment model using the draw variables of a modified P\'{o}lya urn with an expanding number of colors, notably capable of modeling influential opinions (in terms of vertices of high degree) as the graph evolves. 
Similar to the Barab\'{a}si-Albert model, the generated graph grows in size by one vertex at each time instance; in contrast however, each vertex of the graph is uniquely characterized by a color, which is represented by a ball color in the P\'{o}lya urn.  More specifically at each time step, we draw a ball from the urn and return it to the urn along with a number of reinforcing balls of the same color; we also add another ball of a new color to the urn.
We then construct an edge between the new vertex (corresponding to the new color) and the existing vertex whose color ball is drawn. Using color-coded vertices in conjunction with the time-varying reinforcing parameter allows for vertices added (born) later in the process to potentially attain a high degree in a way that is not captured in the Barab\'{a}si-Albert model. We study the degree count of the vertices by analyzing the draw vectors of the underlying stochastic process. In particular, we establish the probability distribution of the random variable counting the number of draws of a given color which determines the degree of the vertex corresponding to that color in the graph. We further provide simulation results presenting a comparison between our model and the Barab\'{a}si-Albert network. 

\end{abstract}

\begin{IEEEkeywords}
 P\'{o}lya urn, preferential attachment, reinforcement stochastic processes, degree distribution of random graphs.
\end{IEEEkeywords}
\begin{spacing}{1.59}
\section{Introduction}
Preferential attachment graphs are an important class of randomly generated graphs which are often used to capture the ``rich gets richer'' phenomenon. This class of random graphs 
has been widely studied within the areas of statistical mechanics~\cite{AKH-MW:01,CT-RO:22}, network science~\cite{GC-BS:04}, probability theory~\cite{WK:05,MZ-AS:22} and game theory~\cite{FCS-JMP:05}. One of the most popular model of a preferential attachment graph is the so-called Barab\'{a}si-Albert model~\cite{ALB-RA:99}, which has since been modified in a variety of ways~\cite{XZ-ZG-BW:12,ML-GW-TC:06,TFAA-GAA-AFM-RSF-FWSL:21}. Various other models have been devised thereafter to  generate preferential attachment graphs; for example in~\cite{NB-CB-JTC-RMD-RDK:05} the growth of the random graph is competition based. Given a graph at a certain time step, the new vertex attaches itself to an existing vertex which ends up minimizing a certain cost function. For a vertex, this cost function depends on its centrality and distance from the root ensuring that the vertices with higher degrees have lower cost functions. In~\cite{PLK-SR-FL:00}, a continuous-time equation governing the number of vertices with degree $k$ is formulated to study citation networks. In~\cite{ADF-AMF-JV:06}, a randomly growing graph algorithm that combines the features of a geometric random graph and a preferential attachment graph is analysed. In~\cite{AC-VDPS-FC:06}, properties of Wikipedia are studied by representing topics as vertices and hyperlinks between them as edges. Several preferential attachment hypergraphs (i.e., graphs in which an edge can join any number of vertices) generating models have also been devised in the literature \cite{CA-ZL-YN-DP:19,FG-NN-TT-MS:22,MI-TP-HS:22}.  

Our main objective in this paper is to introduce a new preferential attachment graph generating algorithm using a modified P\'{o}lya urn model.
In the classical two-color P\'{o}lya urn  model, at each time instant $t$, a ball is drawn from the urn and returned to the urn along with one ball of the color drawn~\cite{GP:30,HM:08,RP:07}. In this sense, the P\'{o}lya urn model is a suitable reinforcement process for modeling preferential phenomena. In particular, in the context of randomly growing graphs, the reinforcements favor a high degree vertex in getting an even higher degree as the network grows. 
Indeed,  P\'{o}lya urn  models have been widely used to model reinforcements, for instance, in communication channels~\cite{polya-ch}, image segmentation~\cite{banerjee1999image}, social and epidemic networks~\cite{CA-HD-BK:20,JYK-HHJ:10,RT-JPO-JS-JH-KK:06,hayhoe18,SS-FA-BG:22,siam22,consensus22,AJ-AM-EM-RS:23,XH-JL-HD-XJ:23}, citation networks~\cite{HJ-ZN-ALB:03,SM:10,MEJN:01}, actor collaborations~\cite{RA-ALB:00,HJ-ZN-ALB:03}, mechanics \cite{RA-ALB:02} and Polymer formations \cite{KRB-RK-HM:22}. Naturally, P\'{o}lya urns have been used to model  preferential attachment graphs in the literature. For example,  in \cite{AC-CC-ML:13} a generalized P\'{o}lya urn process is used to devise a preferential attachment graph generating algorithm (refer to~\cite{FC-SH-DJ:03,RIO:09} for a detailed description of this generalized P\'{o}lya urn process.) In \cite{NB-CB-JT-AS:14}, a preferential attachment type multi-graph (i.e., a graph that can have more than one edge between a pair of vertices) is constructed using different variations of the P\'{o}lya urn process which was used to study the spread of viruses on the internet in \cite{NB-CB-JC-AS:05}. In \cite{RM-GL:19}, the probability distribution of weights on the edges of a fixed network is established via the draws of the classical two-color P\'{o}lya urn. This setup ensures that the more two vertices have interacted in the past, the more likely they are to interact in the future. More elaboration on the similarities between P\'{o}lya urns and preferential attachment graphs is given in the survey in~\cite{RP:07}.

The Barab\'{a}si-Albert model is well known to
exhibit a power law distribution as the number of vertices becomes sufficiently large, given by $p(k) \sim k^{-3}$, where $p(k)$ is the probability of randomly selecting a vertex with degree $k$ in the network. Despite the fact that this power law can be used to study various properties of the Barab\'{a}si-Albert model such as  Hirsch index distribution and the clustering coefficient, see~\cite{ALB:09,BB-OR:04,RA-ALB:00} for definitions, the likelihood of vertices gaining new edges is solely determined by their degree. This is not realistic, when modeling scenarios where newly added individuals are accompanied with {\em impactful ideas that can lead to rapid or disruptive influence, regardless of their initially low degree}.

Motivated by the above mentioned shortcoming of the Barab\'{a}si-Albert model, in this paper, we construct randomly growing undirected graphs using the draw variables of a single modified P\'{o}lya urn with an expanding number of colors. The P\'{o}lya process is modified in the sense that at each time instant, not only is a ball drawn and returned to the urn along with additional reinforcing balls of the same color, but another ball of a new color is also added to the urn. This new color corresponds to a new vertex which is added to the graph at this time instant. More specifically, the network is generated by associating each incoming vertex to the new color ball added after each draw and by attaching it to the existing vertex represented by the drawn color. The number of colors in the urn grows without bound with the number of draws, and the generated network has a preferential attachment property as the vertices corresponding to dominant colors (i.e., colors in the urn with a large number of balls) are more likely to attract newly formed vertices as their neighbors. The resulting preferential attachment growing graph is thus constructed via a P\'{o}lya urn; this enables us to track and characterize the degree count of individual vertices in the network through the draw variables of their corresponding colors giving each vertex a unique identity which is absent in the Barab\'{a}si-Albert model. Moreover, using an expanding color P\'{o}lya urn with each vertex uniquely corresponding to a ball color, sets our model apart from other models in the literature~\cite{AC-CC-ML:13,RM-GL:19,NB-CB-JC-AS:05} which use either the classical two-color P\'{o}lya urns or a generalized version of P\'{o}lya urns (with finitely many colors) for generating preferential attachment graphs. Indeed, the draw variables of the P\'{o}lya urn capture the entire structure of the generated graph and hence it is enough to study the behaviour of these draw variables to understand the properties of the graph. Moreover, we use an extra time-varying parameter to set the number of balls (not necessarily an integer) added to the P\'{o}lya urn to reinforce the color of the drawn ball at each time instant. The time-varying nature of this reinforcement parameter allows us for any given vertex to tweak the likelihood of amplifying or dampening its degree growth depending on the time at which it was introduced in the network (i.e., its birth time). This feature can be used to regulate the dominance (in terms of gaining edges) of high degree vertices over low degree ones in the generated preferential attachment graph. Therefore, unlike the Barab\'{a}si-Albert algorithm and the aforementioned models, our model can be used to generate random networks with a variety of degree distributions catering to a wide range of real-world growing networks other than power law distributions, including the generation of networks where recently formed vertices can play a
disruptive role. Furthermore, we observe that in the special case of using a reinforcement parameter equal to $1$, our model is actually equivalent to the Barab\'{a}si-Albert algorithm (except for the initialization step).

This paper is organized as follows. In Section \ref{sec:model}, we describe our model for constructing preferential attachment type graphs using a modified P\'{o}lya urn with expanding colors. We determine the P\'{o}lya urn's composition in terms of its draw random variables and use it to establish the conditional distribution of the urn's draw vector at a given time instant given all past draw vectors. In Section \ref{sec:degree}, we define a counting random variable which tracks the degree of vertices in the graph as it evolves and analytically derive its probability distribution. We further verify that the resulting distribution expression of the counting random variable defines a legitimate probability mass function over its support set. In Section \ref{sec:simulations}, we present via simulations a detailed comparison of our model with the Barab\'{a}si-Albert model by plotting the degree distribution and vertices average birth time  for various choices of reinforcement parameters. We discuss the advantages of generating preferential attachment networks through our model over the Barab\'{a}si-Albert model. Finally, conclusions are stated in Section \ref{sec:conclusions}.

\section{The Model}\label{sec:model}
 We construct a sequence of undirected graphs $\mathcal{G}_{t}$, where $t\geq 0$ denotes the time index, using a P\'{o}lya  reinforcement process. We start with $\mathcal{G}_{0} = (V_{0},\mathcal{E}_{0})$, where the initial vertex and the edge set are respectively, $V_{0}=\{c_{1}\}$ and $\mathcal{E}_{0} =\{(1,1)\}$, i.e., a self-loop on vertex $1$.
At each time step $t\geq 1$, a new vertex enters the graph and forms an edge with one of the existing vertices. The latter vertex is selected according to the draw variable of a P\'{o}lya urn with an expanding number of colors as follows:  
\begin{itemize}
    \item At time $t=0$, the P\'{o}lya urn consists of a single ball of color $c_{1}$.    
    \item At each time instant $t\geq 1$, we draw a ball and return it to the urn along with $\Delta_{t}>0$ additional (reinforcing) balls of the same color. We also add a ball of a new color $c_{t+1}$. We then introduce a new vertex to the graph $\mathcal{G}_{t-1}$ (corresponding to the color $c_{t+1}$) and connect it with the vertex whose color ball is drawn at time $t$. This results in the newly formed graph $\mathcal{G}_{t}$. Note that at time $t=0$, the urn consist of only one $c_{1}$ color ball. Hence, the draw variable at time $t=1$ is deterministic and corresponds to drawing a $c_{1}$ color ball.
\end{itemize}
At any given time instant $t$, we define the draw random vector 
\[\mathbf{Z}_{t}:=(Z_{1,t},Z_{2,t},\cdots,Z_{t,t})\] 
of length $t$, where 
\begin{align}\label{draw-mechanism}
Z_{j,t}= \begin{cases}
1  & \textrm{if a $c_{j}$ color ball is drawn at time $t$}\\
0 \quad & \textrm{otherwise}
\end{cases}
\quad \quad \textrm{for } 1\leq j \leq t.
\end{align}
The vector $\textbf{Z}_{t}$ is a standard unit vector for all time instances $t\geq 1$, and since at time $t=1$ there is only $c_{1}$ color ball present in the urn, $\textbf{Z}_{1}= Z_{1,1} =1$. We denote the ``composition'' of the P\'{o}lya urn at any given time instant $t$ by the random vector 
\[\mathbf{U}_{t} := (U_{1,t},U_{2,t},\cdots,U_{t+1,t}),\] 
where
\begin{align}\label{eqn:ratio_def}
    U_{j,t} = \frac{\textrm{Number of balls of color}\hspace{0.1cm} c_{j} \hspace{0.1cm} \textrm
    {in the urn at time}\hspace{0.1cm} t}{\textrm{Total number of balls in the urn at time}\hspace{0.1cm} t}, 
\end{align}
for $1 \leq j \leq t+1$. In the following lemma, we express the vector $\mathbf{U}_{t}$ in terms of the draw variables.
\begin{lemma}\label{lemma:ratio}
Given $t\geq 0$, $\textbf{U}_{t}$ is given by
\begin{align}\label{eqn:ratio_vector}
\textbf{U}_{t} = \frac{1}{1+t+\sum\limits_{k=1}^{t}\Delta_{k}}\Big(1+ \sum\limits_{n=1}^{t}\Delta_{n}Z_{1,n}, 1 + \sum\limits_{n=2}^{t}\Delta_{n}Z_{2,n}, \cdots, 1+ \sum\limits_{n=t-1}^{t}\Delta_{n}Z_{t-1,n}, 1 + \Delta_{t}Z_{t,t}, 1\Big)
\end{align}
almost surely.\footnote{All identities involving random variables or vectors are (implicitly) understood to hold almost surely.} 
\end{lemma}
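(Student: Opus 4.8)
The plan is to argue by induction on $t$, treating the numerator (the number of balls of each color) and the common denominator (the total number of balls) separately, since both are deterministic functions of the realized draw vectors. First I would record the denominator: at every step $t\geq 1$ the urn receives $\Delta_t$ reinforcing balls of the drawn color together with one ball of the fresh color $c_{t+1}$, so exactly $\Delta_t+1$ balls are added at that step. Starting from the single ball at $t=0$ and summing these increments gives a total of $1+t+\sum_{k=1}^{t}\Delta_k$ balls at time $t$, which is precisely the common denominator appearing in \eqref{eqn:ratio_vector}.

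For the numerators, let $N_{j,t}$ denote the number of balls of color $c_j$ in the urn at time $t$, so that by \eqref{eqn:ratio_def} we have $U_{j,t}=N_{j,t}/(1+t+\sum_{k=1}^{t}\Delta_k)$. The base case $t=0$ is immediate: the urn holds a single ball of color $c_1$, whence $\mathbf{U}_0=(1)$, in agreement with \eqref{eqn:ratio_vector}. For the inductive step I would use that color $c_t$ is introduced, with a single ball, at the end of step $t-1$ and is therefore eligible to be drawn for the first time at step $t$; consequently the draw variable $Z_{j,t}$ of \eqref{draw-mechanism} is meaningful exactly for the colors $c_1,\dots,c_t$ present before the draw at time $t$. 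The resulting update rule is $N_{j,t}=N_{j,t-1}+\Delta_t Z_{j,t}$ for $1\leq j\leq t$, where $N_{t,t-1}=1$ is the just-added color $c_t$ (the last entry of $\mathbf{U}_{t-1}$ in the induction hypothesis), while the brand-new color contributes $N_{t+1,t}=1$.

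Substituting the induction hypothesis $N_{j,t-1}=1+\sum_{n=j}^{t-1}\Delta_n Z_{j,n}$ into this update rule, the extra term $\Delta_t Z_{j,t}$ simply extends the summation range by one, yielding $N_{j,t}=1+\sum_{n=j}^{t}\Delta_n Z_{j,n}$ for every $1\leq j\leq t$; in particular the $j=t$ boundary specializes to $1+\Delta_t Z_{t,t}$, and the remaining component $N_{t+1,t}=1$ matches the final entry of \eqref{eqn:ratio_vector}. Dividing each $N_{j,t}$ by the total count established in the first paragraph then reproduces \eqref{eqn:ratio_vector} and closes the induction.

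I would expect the only real care, rather than genuine difficulty, to lie in the bookkeeping of color birth times and the associated index ranges: one must check that the newest color always carries exactly one ball, that $Z_{j,t}$ is defined only for $j\leq t$, and that extending the summation index from $t-1$ to $t$ is consistent with the reinforcement acting solely on colors already present. Since the urn composition is a deterministic function of the realized draws, the identity holds surely for each realization and hence almost surely, as stated.
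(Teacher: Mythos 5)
Your proposal is correct and follows essentially the same argument as the paper: you compute the same denominator $1+t+\sum_{k=1}^{t}\Delta_k$ from the $\Delta_n+1$ balls added per step, and your inductive update $N_{j,t}=N_{j,t-1}+\Delta_t Z_{j,t}$ unrolls to exactly the closed-form count $1+\sum_{n=j}^{t}\Delta_n Z_{j,n}$ that the paper obtains by direct counting from the birth time $j-1$ of color $c_j$. The only difference is presentational (formal induction versus a one-shot tally), and your bookkeeping of index ranges and the newest color's single ball matches the paper's.
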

\begin{proof}
To compute the ratio in~\eqref{eqn:ratio_def}, recall that at time $n=0$, we have one ball in the urn (this ball is of color $c_{1}$) and for each time instant $n \geq 1$, we add $\Delta_{n}+1$ balls to the urn ($\Delta_{n}$ of the color drawn and $1$ of the new color $c_{n+1}$). Hence the total number of balls in the urn at time $t$ is given by $1+\sum_{n=1}^{t}(\Delta_{n}+1)$.

To determine the number of balls of color $c_{j}$ in the urn after the $t$th draw, we note that the first $c_{j}$ color ball is added to the urn at time $j-1$. After that, at every time instant $n$ (where $j\leq n \leq t$) at which a $c_{j}$ color ball is drawn, we add $\Delta_{n}$ balls of $c_{j}$ color to the urn. Hence, the number of balls of color $c_{j}$ in the urn at time $t$ is equal to $1+\sum\limits_{n=j}^{t}\Delta_{n}Z_{j,n}$. Therefore, the ratio of color $c_{j}$ balls in the urn at time $t$ in \eqref{eqn:ratio_def} is given by:
\begin{align}\label{eqn:entryofU}
U_{j,t} = \frac{1+\sum_{n=j}^{t}\Delta_{n}Z_{j,n}}{1+t+\sum\limits_{k=1}^{t}\Delta_{k}} \hspace{0.5cm} \textrm{for }1\leq j \leq t+1,
\end{align}
which yields \eqref{eqn:ratio_vector}.
\end{proof}
\begin{remark}
As expected, the sum of the components of $\mathbf{U}_{t}$ in \eqref{eqn:ratio_vector} is one for all $t\geq0$. To see this we first note that, for $t=0$, $\mathbf{U}_{0}=U_{1,0} = 1$. For any time $t\geq 1$, we have the following from \eqref{eqn:ratio_vector}:
\begin{align}\label{eqn:sum_of_ratios}
\sum\limits_{j=1}^{t+1}U_{j,t} &= \frac{1}{1+t+\sum\limits_{k=1}^{t}\Delta_{k}}((t+1) + \sum\limits_{n=1}^{t}\Delta_{n}Z_{1,n} + \sum\limits_{n=2}^{t}\Delta_{n}Z_{2,n} + \cdots + \sum\limits_{n=t-1}^{t}\Delta_{n}Z_{t-1,n}+ \Delta_{t}Z_{t,t})\nonumber\\
& =  \frac{1}{1+t+\sum\limits_{k=1}^{t}\Delta_{k}}((t+1) +\sum\limits_{i=1}^{t}\sum\limits_{n=i}^{t}\Delta_{n}Z_{i,t})\nonumber\\
& =  \frac{1}{1+t+\sum\limits_{k=1}^{t}\Delta_{k}}((t+1) +\sum\limits_{n=1}^{t}\Delta_{n}\sum\limits_{i=n}^{t}Z_{i,t})
\end{align}
but since $\mathbf{Z}_{t}$ is a standard unit vector for all $t \geq 1$, the right-hand side of \eqref{eqn:sum_of_ratios} simplifies as follows:
\begin{align*}
\sum\limits_{j=1}^{t+1}U_{j,t} = \frac{1}{1+t+\sum\limits_{k=1}^{t}\Delta_{k}}((t+1) + \sum\limits_{n=1}^{t}\Delta_{n}) = 1. \hspace{6cm}   
\end{align*}
\end{remark}
 An illustration of our model is given in  Figure \ref{fig:urn_graph_picture} where we show a sample path of the random vectors $\mathbf{U}_{t}$ and $\mathbf{Z}_{t}$, for $ t\leq 3$ and with $\Delta_{t}=2$. 
\begin{figure}
\centering
\includegraphics[scale=0.24]{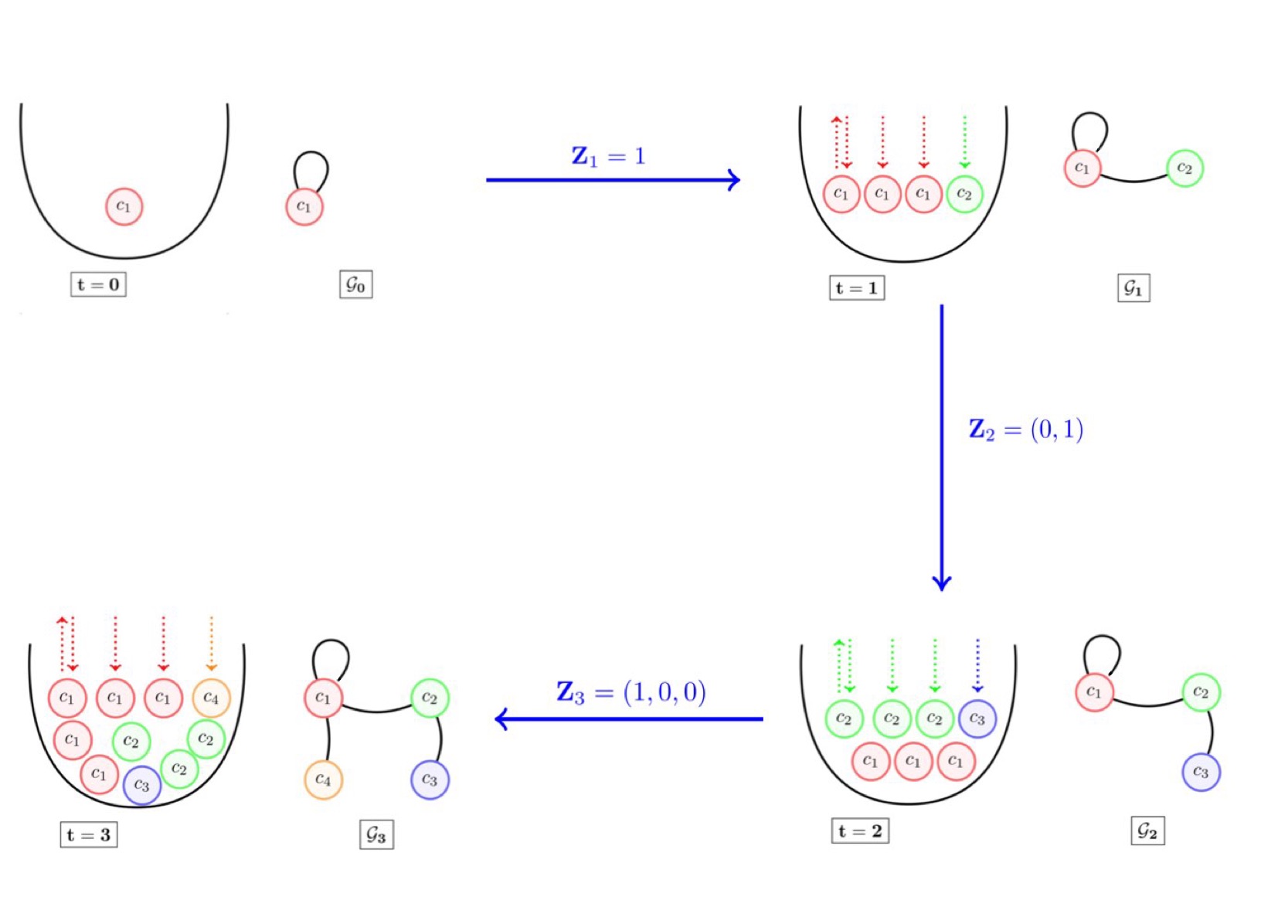}
\caption{We illustrate a sample path for constructing a preferential attachment graph using an expanding color P\'{o}lya urn with $\Delta_{t}=2$.  For $t=0$, the urn has only one ball of color $c_{1}$. This urn corresponds to $\mathcal{G}_{0}$ and $\mathbf{U}_{0}=U_{1,0}=1$. For $t=1$, the $c_{1}$ color ball is drawn from and returned to the urn (i.e., $\mathbf{Z}_{1}=Z_{1,1}=1$). Two additional $c_{1}$ color balls are added to the urn along with a new $c_{2}$ color ball and so $\mathbf{U}_{1}=(3/4,1/4)$. For $t=2$, a $c_{2}$ color ball is drawn from and returned to the urn (i.e., $\mathbf{Z}_{2}=(0,1)$). Two additional $c_{2}$ color balls  are added to the urn along with a new $c_{3}$ color ball; hence $\mathbf{U}_{2}=(3/7,3/7,1/7)$. For $t=3$, a $c_{1}$ color ball is drawn from and returned to the urn (i.e., $\mathbf{Z}_{3}=(1,0,0)$). Two additional $c_{1}$ color balls are added along with a new $c_{4}$ color ball; thus $\mathbf{U}_{3}=(5/10,3/10,1/10,1/10)$.}
\label{fig:urn_graph_picture}
\end{figure}
 We further write the conditional probabilities of the draw variables given the past. More specifically, for $1\leq j\leq t$, using \eqref{eqn:entryofU}, we have that
\begin{align}\label{eqn:cond_prob}
P( \textbf{Z}_{t} =\textbf{e}_{j,t}\hspace{0.05cm}|\hspace{0.05cm}\textbf{Z}_{t-1},\textbf{Z}_{t-2},\cdots,\textbf{Z}_{1})
&= P(Z_{j,t}= 1\hspace{0.05cm}|\hspace{0.05cm}\textbf{Z}_{t-1},\textbf{Z}_{t-2},\cdots,\textbf{Z}_{1}) \nonumber\\
&= P(\textrm{a $c_{j}$ color ball is drawn at time $t$}\hspace{0.05cm}|\hspace{0.05cm}\textbf{Z}_{t-1},\textbf{Z}_{t-2},\cdots,\textbf{Z}_{1})\nonumber\\
&= U_{j,t-1} = \frac{1 + \sum_{n=j}^{t-1}\Delta_{n}Z_{j,n}}{1+(t-1)+\sum\limits_{k=1}^{t-1}\Delta_{k}},
\end{align}
where $\mathbf{e}_{j,t}$ represents a standard unit vector of length $t$ whose $j$th component is $1$. Considering the case where $j=t$ in \eqref{eqn:cond_prob} and the convention that $\sum_{n=t}^{t-1}\Delta_{n}Z_{t,n}=0$, we obtain 
\[U_{t,t-1} = \frac{1 + \sum_{n=t}^{t-1}\Delta_{n}Z_{t,n}}{t+\sum\limits_{k=1}^{t-1}\Delta_{k}}=\frac{1}{t+\sum\limits_{k=1}^{t-1}\Delta_{k}} = P(Z_{t,t}=1)\]
and hence
\begin{align}\label{eqn:cond_one}  
P(Z_{t,t}=1\hspace{0.05cm}|\hspace{0.05cm}\mathbf{Z}_{t-1},\mathbf{Z}_{t-2},\cdots,\mathbf{Z}_{1}) = P(Z_{t,t}=1)=\frac{1}{t+\sum_{k=1}^{t-1}\Delta_{k}},
\end{align}
i.e.,  the conditional probability of drawing a ball of color $c_{t}$ at time $t$ equals the marginal probability of drawing a ball of color $c_{t}$ at time $t$. Similarly, we have that
\begin{align}\label{eqn:cond_zero}  
P(Z_{t,t}=0\hspace{0.05cm}|\hspace{0.05cm}\mathbf{Z}_{t-1},\mathbf{Z}_{t-2},\cdots,\mathbf{Z}_{1}) = P(Z_{t,t}=0)=\frac{t-1+\sum_{n=1}^{t-1}\Delta_{n}}{t+\sum_{k=1}^{t-1}\Delta_{k}},
\end{align}
implying that $Z_{t,t}$ is independent of the random vectors $\{\mathbf{Z}_{t-1},\mathbf{Z}_{t-2},\cdots,\mathbf{Z}_{1}\}$. More generally, we obtain the marginal probability for the random variable $Z_{j,t}$ for any $1\leq j \leq t$ by taking expectation on both sides in \eqref{eqn:cond_prob} with respect to the random vectors $\textbf{Z}_{t-1},\textbf{Z}_{t-2},\cdots, \textbf{Z}_{1}$ as follows:
\begin{align}\label{eqn:marginal_prob}
P(Z_{j,t}=1) = E(U_{j,t-1})= \frac{1 + \sum_{n=j}^{t-1}\Delta_{n}P(Z_{j,n}=1)}{t+\sum_{k=1}^{t-1}\Delta_{k}} = 1-P(\hspace{0.03cm}Z_{j,t}=0) \quad \quad \textrm{for} \quad 1\leq j \leq t.
\end{align}
For $j=t$, the formula in \eqref{eqn:marginal_prob} for $P(Z_{t,t}=1)$ reduces to \eqref{eqn:cond_one}, but for $j<t$, the formula for $P(Z_{j,t}=1)$ is a recursive function of the marginal probabilities of past draw variables: $P(Z_{j,1}=1),\cdots,P(Z_{j,t-1}=1)$.

We further note that, for graph $\mathcal{G}_{t}$, the edge between the new vertex to one of the existing vertices in $\mathcal{G}_{t-1}$ is made using the realization of the draw vector $\mathbf{Z}_{t}$. Using \eqref{eqn:cond_prob}, we observe that the conditional probability $P(\mathbf{Z}_{t}=\mathbf{e}_{j,t}|\mathbf{Z}_{t-1},\cdots,\mathbf{Z}_{1})$ can be written in terms of the draw variables  $Z_{j,j},\cdots,Z_{j,t-1}$. Hence, all the spatial information of the graph $\mathcal{G}_{t}$ is \textit{encoded} in the sequence of random draw vectors $\{\mathbf{Z}_{1},\ldots,\mathbf{Z}_{t-1},\mathbf{Z}_{t}\}$. We illustrate this property in the following example, where we retrieve the graph $\mathcal{G}_{4}$ using $\{\mathbf{Z}_{1},\mathbf{Z}_{2},\mathbf{Z}_{3},\mathbf{Z}_{4}\}$.
\begin{example}
Consider the following realizations for the random draw vectors $\mathbf{Z}_{1}$, $\mathbf{Z}_{2}$, $\mathbf{Z}_{3}$ and $\mathbf{Z}_{4}$ for all $t\geq 1$:
\begin{align*}
\mathbf{Z}_{1} &= 1, \hspace{2cm}
\mathbf{Z}_{2} = (1,0),\\
\mathbf{Z}_{3} &= (0,1,0),\hspace{1cm}
\mathbf{Z}_{4} = (0,1,0,0).
\end{align*}
By construction, the graph $\mathcal{G}_{0}$ consists of only one vertex $c_{1}$ with a self-loop. Since we start with only one ball of color $c_{1}$ in the urn, the random variable $\mathbf{Z}_{1}= 1$ is deterministic and results in an edge drawn between the $c_{1}$ vertex and the new incoming vertex $c_{2}$. For $t=2$, since $\mathbf{Z}_{2} = (1,0)$, the new incoming vertex $c_{3}$ is connected to $c_{1}$. Similarly for $t=3$, the new vertex $c_{4}$ is connected to $c_{2}$ and finally, for $t=4$, using the value of $\mathbf{Z}_{4}$, we connect $c_{5}$ to $c_{2}$. Hence, the graph $\mathcal{G}_{4}$ is as shown in Figure \ref{encoded-graph}.
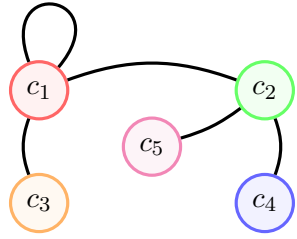
\begin{figure}
\begin{center}
\begin{tikzpicture}
[roundnode/.style={circle, draw=red!60, fill=red!5, very thick, minimum size=5mm},
greennode/.style={circle, draw=green!60, fill=green!5, very thick, minimum size=5mm},
bluenode/.style={circle, draw=blue!60, fill=blue!5, very thick, minimum size=5mm},
orangenode/.style={circle, draw=orange!60, fill=orange!5, very thick, minimum size=5mm},  
magentanode/.style={circle, draw=magenta!60, fill=magenta!5, very thick, minimum size=5mm}
]
\node[roundnode]  (1) at (1.0, -13.0) {$c_{1}$};
\node[greennode] (2)  at (4.0, -13.0) {$c_{2}$};
\node[bluenode] (4)  at (4.0, -14.5){$c_{4}$}; 
\node[orangenode](3) at (1.0,-14.5){$c_{3}$};
\node[magentanode](5) at (2.5,-13.75){$c_{5}$};
\tikzset{every loop/.style={}} 
\path[-,very thick, black]
  (1) edge [out=110, in=50, loop] node{} (1)
  (1) edge [bend left=20](2)
  (1) edge[bend right=20] (3)
  (2) edge [bend left =20] (4)
  (2) edge [ bend left =10] (5);
\end{tikzpicture}
\end{center}
\caption{An illustration of how the sequence of draw vectors $\{\mathbf{Z}_{4}=(0,1,0,0),\mathbf{Z}_{3}=(0,1,0),\mathbf{Z}_{2}=(1,0),\mathbf{Z}_{1}=1\}$ determines $\mathcal{G}_{4}$.}
\label{encoded-graph}
\medskip
\end{figure}
\end{example}
\section{Analysing the degree count of the vertices in \boldmath{$\mathcal{G}_{t}$}}\label{sec:degree}
 The goal of this section is to establish a formula for the probability distribution of degree count of a fixed vertex in the preferential attachment graph constructed via our modified P\'{o}lya process until time $t$ (including time $t$). We obtain this formula by writing the degree of a fixed vertex at time $t$ in terms of the total number of balls of color corresponding to this vertex drawn until time $t$. To this end, let random variable $N_{j,t}$ count the number of draws of color $c_{j}$ from the urn until time $t$ (including time $t$). Since by construction, for a fixed color $c_{j}$, at any time $t\geq j-1$ the degree of vertex $c_{j}$ at time $t$, denoted by $d_{j,t}$, is one more than the  number of times a $c_{j}$ color ball is drawn from the urn until time $t$; the additional one here is due to the fact that at each time instant, the new vertex which is added has degree one. For instance in Figure \ref{fig:urn_graph_picture}, the color $c_{2}$ (green) is drawn once until time $3$ and hence the degree of vertex corresponding to color $c_{2}$ in $\mathcal{G}_{3}$ is two. Therefore,
\[d_{j,t}=1+N_{j,t}  \quad \textrm{for all } 1\leq j\leq t+1,\]
where $N_{t+1,t}=0$. Also, note that the first time a color $c_{j}$ ball can be drawn is at time $j$. In the following theorem, we establish an analytical expression for the (marginal) probability mass function of random variable $N_{j,t}$. 
\begin{theorem}\label{thm:main_result}
Fix $t\geq 1$. For a color $c_{j}$, $1\leq j\leq t$, we have that 
\begin{align}\label{eqn:cdf}
&P(N_{j,t}=k)=\nonumber\\\\
&\begin{cases}\sum\limits_{(i_{1},i_{2},\cdots,i_{k})\in \mathcal{A}_{j,k}^{(t)}} \hspace{-0.8cm}\frac{\prod\limits_{a=1}^{k}\big(1+\sum\limits_{b=1}^{a-1}\Delta_{i_{b}}\big)\prod\limits_{p=j,p \notin \{i_{1},\cdots,i_{k}\}}^{t}\big((p\hspace{0.05cm}-\hspace{0.05cm}1)+ \sum\limits_{l=1,l \notin \{i_{1},\cdots,i_{k}\}}^{p-1}\Delta_{l}\big)}{\prod\limits_{n=j-1}^{t-1}\big((n+1)+\sum\limits_{m=1}^{n}\Delta_{m}\big)} &\textrm{for } 1\leq k \leq t-j+1\nonumber\\\\
\hspace{2.5cm}\frac{\prod\nolimits_{p=j}^{t}\big((p\hspace{0.05cm}-\hspace{0.05cm}1)+\sum\nolimits_{l=1}^{p-1}\Delta_{l}\big)}{\prod\nolimits_{n=j-1}^{t-1}\big((n+1)+\sum\nolimits_{m=1}^{n}\Delta_{m}\big)} &\textrm{for } k=0,
\end{cases}
\end{align}
where
\begin{align}\label{eqn:A}
\mathcal{A}_{j,k}^{(t)} = 
\begin{cases}
\{(i_{1},i_{2},\cdots,i_{k})\hspace{0.1cm}|\hspace{0.1cm} 1 = i_{1}<i_{2}<\cdots <i_{k}\leq t\} & \text{for $j=1$}\\
    \{(i_{1},i_{2},\cdots,i_{k})\hspace{0.1cm}|\hspace{0.1cm} j \leq i_{1}<i_{2}<\cdots <i_{k}\leq t\} &  \text{for $1<j\leq t$}.\\    
\end{cases}
\end{align}
\end{theorem}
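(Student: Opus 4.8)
The plan is to derive the law of $N_{j,t}$ by decomposing $\{N_{j,t}=k\}$ according to the exact set of time instants at which color $c_j$ is drawn, and then evaluating each pattern via the chain rule using the conditional draw probabilities in \eqref{eqn:cond_prob}. First I would observe that a $c_j$ ball can only be drawn at times $j,j+1,\ldots,t$, so that $N_{j,t}=\sum_{p=j}^{t}Z_{j,p}$ and $\{N_{j,t}=k\}$ is the disjoint union, over all tuples $(i_1,\ldots,i_k)\in\mathcal{A}_{j,k}^{(t)}$, of the events $E_{(i_1,\ldots,i_k)}$ on which $c_j$ is drawn exactly at the times $i_1<\cdots<i_k$ and at no other admissible time. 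Thus $P(N_{j,t}=k)=\sum_{(i_1,\ldots,i_k)\in\mathcal{A}_{j,k}^{(t)}}P(E_{(i_1,\ldots,i_k)})$, and it remains to compute the probability of one such pattern.

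The structural fact that makes the chain rule factorize cleanly is that the conditional draw probability $U_{j,p-1}$ in \eqref{eqn:cond_prob} depends on the past only through the count $1+\sum_{n=j}^{p-1}\Delta_n Z_{j,n}$ of $c_j$ balls, i.e. only through the indicators $Z_{j,j},\ldots,Z_{j,p-1}$, whereas the total ball count $p+\sum_{m=1}^{p-1}\Delta_m$ is deterministic. Consequently, on the event that $c_j$ has been drawn exactly at those $i_b$ with $i_b<p$, the number of $c_j$ balls present at time $p-1$ equals the constant $1+\sum_{b:\,i_b<p}\Delta_{i_b}$. Peeling off the indicators one time step at a time by iterated conditioning (first $Z_{j,t}$, then $Z_{j,t-1}$, and so on down to $Z_{j,j}$), every factor is then a deterministic conditional probability: a \emph{draw} at time $i_a$ contributes $\bigl(1+\sum_{b=1}^{a-1}\Delta_{i_b}\bigr)\big/\bigl(i_a+\sum_{m=1}^{i_a-1}\Delta_m\bigr)$, while a \emph{non-draw} at an admissible $p\notin\{i_1,\ldots,i_k\}$ contributes $\bigl((p-1)+\sum_{l=1,\,l\notin\{i_1,\ldots,i_k\}}^{p-1}\Delta_l\bigr)\big/\bigl(p+\sum_{m=1}^{p-1}\Delta_m\bigr)$, the non-draw numerator arising by subtracting the $c_j$-ball count from the deterministic total.

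Multiplying these factors over $p=j,\ldots,t$ expresses $P(E_{(i_1,\ldots,i_k)})$ as the quotient of the two numerator products appearing in \eqref{eqn:cdf} over the common denominator $\prod_{p=j}^{t}\bigl(p+\sum_{m=1}^{p-1}\Delta_m\bigr)$, which under the reindexing $n=p-1$ becomes $\prod_{n=j-1}^{t-1}\bigl((n+1)+\sum_{m=1}^{n}\Delta_m\bigr)$; summing over $\mathcal{A}_{j,k}^{(t)}$ then gives the stated expression for $1\le k\le t-j+1$, and the $k=0$ case is simply the single all-non-draw pattern. The deterministic first draw for $j=1$ requires no separate argument: any tuple that omits $i_1=1$ carries a non-draw factor at $p=1$ whose numerator $(1-1)+\sum_{l=1}^{0}\Delta_l$ vanishes, which is precisely why $\mathcal{A}_{1,k}^{(t)}$ fixes $i_1=1$. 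I expect the main obstacle to be the bookkeeping in the non-draw numerator—verifying that subtracting the running $c_j$-count from the deterministic total collapses to $(p-1)+\sum_{l\notin\{i_1,\ldots,i_k\}}^{p-1}\Delta_l$—together with carefully justifying that the full-history conditioning in \eqref{eqn:cond_prob} reduces to conditioning on the $c_j$-indicators alone, so that the telescoping product of conditional probabilities is valid.
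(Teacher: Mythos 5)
Your proposal is correct and follows essentially the same route as the paper: decompose $\{N_{j,t}=k\}$ over the draw-time tuples in $\mathcal{A}_{j,k}^{(t)}$, peel off the indicators $Z_{j,t},Z_{j,t-1},\ldots,Z_{j,j}$ by iterated conditioning using \eqref{eqn:cond_prob}, identify the draw and non-draw factors, and telescope the deterministic denominators into $\prod_{n=j-1}^{t-1}\big((n+1)+\sum_{m=1}^{n}\Delta_{m}\big)$, with $k=0$ as the all-non-draw pattern. Your added observation that tuples omitting $i_{1}=1$ when $j=1$ are killed by a vanishing numerator factor, and your explicit flagging of the reduction from full-history conditioning to conditioning on the $c_{j}$-indicators alone, are consistent with (and slightly more careful than) the paper's treatment.
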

\begin{proof} Note that the set $\mathcal{A}_{j,k}^{(t)}$ defined in \eqref{eqn:A} gives all possible ways in which $k$ elements can be chosen from a set of $t-j+1$ consecutive integers. In the context of our model, this set represents all possible $k$ length tuples of time instants such that a color $c_{j}$ ball is drawn at each of these time instants. For $j=1$, the first draw at $t=1$ is deterministic and hence $i_{1}=1$ for $j=1$ as given in \eqref{eqn:A}. For $1\leq k \leq t-j+1$, we have 
\begin{align}\label{eqn:cond_proof}
P(N_{j,t}=k)&=\hspace{-0.5cm}\sum\limits_{(i_{1},\cdots,i_{k})\in \mathcal{A}_{j,k}^{(t)}}\hspace{-0.5cm}P(Z_{j,j}=0,\cdots,Z_{j,i_{1}-1}=0,Z_{j,i_{1}}=1,Z_{j,i_{1}+1}=0,\cdots Z_{j,i_{2}-1}=0,Z_{j,i_{2}}=1, Z_{j,i_{2}+1}=0, \nonumber\\
&\hspace{0.5cm}\cdots,
Z_{j,i_{3}-1}=0,Z_{j,i_{3}}=1,\cdots,Z_{j,i_{k}-1}=0,Z_{j,i_{k}}=1, Z_{j,i_{k}+1}=0,\cdots, Z_{j,t}=0)\nonumber\\[10 pt]
&=\hspace{-0.5cm}\sum\limits_{(i_{1},\cdots,i_{k})\in \mathcal{A}_{j,k}^{(t)}}\hspace{-0.5cm}[P(Z_{j,t}=0\hspace{0.02cm} |\hspace{0.02cm}Z_{j,j}=0,\cdots,Z_{j,i_{1}-1}=0,Z_{j,i_{1}}=1,Z_{j,i_{1}+1}=0,\cdots, Z_{j,i_{2}-1}=0,Z_{j,i_{2}}=1,
\nonumber\\
&Z_{j,i_{2}+1}=0,\cdots,Z_{j,i_{3}-1}=0,Z_{j,i_{3}}=1,\cdots ,Z_{j,i_{k}-1}=0,Z_{j,i_{k}}=1, Z_{j,i_{k}+1}=0,\cdots, Z_{j,t-1}=0)\nonumber\\
& \times P(Z_{j,j}=0,\cdots,Z_{j,i_{1}-1}=0,Z_{j,i_{1}}=1,Z_{j,i_{1}+1}=0,\cdots,Z_{j,i_{2}-1}=0,Z_{j,i_{2}}=1,
Z_{j,i_{2}+1}=0,\nonumber\\
&\cdots,Z_{j,i_{3}-1}=0,Z_{j,i_{3}}=1,\cdots ,Z_{j,i_{k}-1}=0,Z_{j,i_{k}}=1, Z_{j,i_{k}+1}=0,\cdots, Z_{j,t-1}=0)].
\end{align}
By substituting \eqref{eqn:cond_prob} in  the conditional probability expressions in \eqref{eqn:cond_proof}, we obtain the following:
\begin{align}\label{eqn:sub_proof_1}
&\hspace{-0.5cm}P(Z_{j,t}=0\hspace{0.03cm}|\hspace{0.03cm}Z_{j,j}=0,\cdots,Z_{j,i_{1}-1}=0,Z_{j,i_{1}}=1,Z_{j,i_{1}+1}=0,\cdots ,Z_{j,i_{2}-1}=0,Z_{j,i_{2}}=1,
Z_{j,i_{2}+1}=0,\cdots \nonumber\\
&Z_{j,i_{3}-1}=0,Z_{j,i_{3}}=1,\cdots, Z_{j,i_{k}-1}=0,Z_{j,i_{k}}=1, Z_{j,i_{k}+1}=0,\cdots, Z_{j,t-1}=0) \nonumber\\ 
&= 1- \dfrac{1+ \sum_{n=j}^{t-1}\Delta_{n}Z_{j,n}}{t + \sum_{m=1}^{t-1}\Delta_{m}}\nonumber\\
&= 1- \dfrac{1+\sum_{l=1}^{k}\Delta_{i_{l}}}{t + \sum_{m=1}^{t-1}\Delta_{m}}\nonumber\\
&= \frac{(t-1) + \sum_{l=1,l\notin \{i_{1},\cdots,i_{k}\}}^{t-1}\Delta_{l}}{t + \sum_{m=1}^{t-1}\Delta_{m}}.
\end{align}
Now, substituting the conditional probability expression obtained in \eqref{eqn:sub_proof_1} in \eqref{eqn:cond_proof}, yields

\begin{align}\label{eqn:proof_second}
P(N_{j,t}=k)&=\hspace{-0.5cm}\sum\limits_{(i_{1},\cdots,i_{k})\in\mathcal{A}_{j,k}^{(t)}}\hspace{-0.2cm}\frac{((t-1) + \sum_{l=1,l\notin \{i_{1},\cdots,i_{k}\}}^{t-1}\Delta_{l})}{t + \sum_{m=1}^{t-1}\Delta_{m}}P(Z_{j,j}=0,\cdots,Z_{j,i_{1}-1}=0,Z_{j,i_{1}}=1, Z_{j,i_{1}+1}=0,\cdots \nonumber\\
&\hspace{0.5cm} \cdots,Z_{j,i_{2}-1}=0,Z_{j,i_{2}}=1,
Z_{j,i_{2}+1}=0,\cdots,Z_{j,i_{3}-1}=0, Z_{j,i_{3}}=1,\cdots,Z_{j,i_{k}-1}=0,\nonumber\\
&\hspace{0.5cm}Z_{j,i_{k}}=1, Z_{j,i_{k}+1}=0,\cdots, Z_{j,t-1}=0).
\end{align}
Similar to \eqref{eqn:cond_proof} and \eqref{eqn:proof_second}, we continue to  recursively write the joint probability as a product of conditional and marginal probabilities and substitute the expressions for the conditional probability using \eqref{eqn:sub_proof_1} as follows: 
\begin{align*}
P(N_{j,t}=k)&=\hspace{-0.5cm}\sum\limits_{(i_{1},\cdots,i_{k})\in\mathcal{A}_{j,k}^{(t)}}\hspace{-0.1cm}\Bigg(\frac{(t-1) + \sum_{l=1,l\notin \{i_{1},\cdots,i_{k}\}}^{t-1}\Delta_{l}}{t + \sum_{m=1}^{t-1}\Delta_{m}}\Bigg)\Bigg(\frac{(t-2) + \sum_{l=1,l\notin \{i_{1},\cdots,i_{k}\}}^{t-2}\Delta_{l}}{t-1 + \sum_{m=1}^{t-2}\Delta_{m}}\Bigg)P(Z_{j,j}=0,\cdots\nonumber\\
&\hspace{0.7cm}Z_{j,i_{1}-1}=0,Z_{j,i_{1}}=1,Z_{j,i_{1}+1}=0,\cdots ,Z_{j,i_{2}-1}=0,Z_{j,i_{2}}=1,
Z_{j,i_{2}+1}=0,\cdots,\nonumber\\
&\hspace{0.7cm}Z_{j,i_{3}-1}=0,Z_{j,i_{3}}=1,\cdots ,Z_{j,i_{k}-1}=0,Z_{j,i_{k}}=1,Z_{j,i_{k}+1}=0,\cdots, Z_{j,t-2}=0)\nonumber\\
&\hspace{0.7cm}\vdots \nonumber\\
&=\hspace{-0.5cm}\sum\limits_{(i_{1},\cdots,i_{k})\in\mathcal{A}_{j,k}^{(t)}}\Bigg[\Bigg(\frac{(t-1) + \sum_{l=1,l\notin \{i_{1},\cdots,i_{k}\}}^{t-1}\Delta_{l}}{t + \sum_{m=1}^{t-1}\Delta_{m}}\Bigg)\Bigg(\frac{(t-2) + \sum_{l=1,l\notin\{i_{1},\cdots,i_{k}\}}^{t-2}\Delta_{l}}{t-1 + \sum_{m=1}^{t-2}\Delta_{m}}\Bigg)\cdots\nonumber\\[8pt]
&\hspace{0.7cm}\Bigg(\frac{i_{k} + \sum_{l=1,l\notin\{i_{1},\cdots,i_{k}\}}^{i_{k}}\Delta_{l}}{i_{k}+1 + \sum_{m=1}^{i_{k}}\Delta_{m}}\Bigg)\Bigg(\frac{1+\sum\nolimits_{l=1 }^{k\hspace{0.02cm}-1}\Delta_{i_{l}}}{i_{k} +\sum_{m=1}^{i_{k}-1}\Delta_{m}}\Bigg)\Bigg(\frac{i_{k}-2+ \sum_{l=1,l\notin \{i_{1},\cdots,i_{k}\}}^{i_{k}-2}\Delta_{l}}{i_{k}-1 + \sum_{m=1}^{i_{k}-2}\Delta_{m}}\Bigg) \cdots \nonumber\\[8pt]
&\hspace{0.7cm}\Bigg(\frac{i_{k-1} + \sum_{l=1,l\notin \{i_{1},\cdots,i_{k}\}}^{i_{k-1}}\Delta_{l}}{i_{k-1}+1 + \sum_{m=1}^{i_{k-1}}\Delta_{m}}\Bigg)\hspace{-0.1cm}\Bigg(\frac{1+\sum\nolimits_{l=1}^{k\hspace{0.02cm}-2}\Delta_{i_{l}}}{i_{k-1} +\sum_{m=1}^{i_{k-1}-1}\Delta_{m}}\Bigg)\hspace{-0.1cm}\Bigg(\frac{i_{k-1}-2+ \sum_{l=1,l\notin \{i_{1},\cdots,i_{k}\}}^{i_{k-1}-2}\Delta_{l}}{i_{k-1}-1 + \sum_{m=1}^{i_{k-1}-2}\Delta_{m}}\Bigg)  \nonumber\\[8pt]
&\hspace{0.2cm}\cdots\Bigg(\frac{1}{i_{1}+\sum_{m=1}^{i_{1}-1}\Delta_{m}}\Bigg)\hspace{-0.1cm}\Bigg(\frac{i_{1}-2+\sum_{l=1, l \notin \{i_{1},\cdots,i_{k}\}}^{i_{1}-2}\Delta_{l}}{i_{1}-1 + \sum_{m=1}^{i_{1}-2}\Delta_{m}}\Bigg)\hspace{-0.1cm}\cdots \hspace{-0.1cm}\Bigg(\frac{j-1 + \sum_{l=1, l \notin \{i_{1},\cdots,i_{k}\}}^{j-1}\Delta_{l}}{j + \sum_{m=1}^{j-1}\Delta_{m}}\Bigg)\hspace{-0.1cm}\Bigg]\nonumber\\[8pt]
&=\hspace{-0.5cm}\sum\limits_{(i_{1},\cdots,i_{k})\in\mathcal{A}_{j,k}^{(t)}}\Bigg[\frac{\prod_{a=1}^{k}(1+\sum_{b=1}^{a-1}\Delta_{i_{b}})}{\prod_{n=j-1}^{t-1}((n+1)+\sum_{m=1}^{n}\Delta_{m})}\bigg(\prod_{l_{1}=j-1}^{i_{1}-2}(l_{1}+\sum\nolimits_{l=1, l \notin \{i_{1},\cdots,i_{k}\}}^{l_{1}}\Delta_{l})\bigg)\nonumber\\[8pt]
&\hspace{0.7cm}\bigg(\prod_{l_{2}=i_{1}}^{i_{2}-2}(l_{2}+ \sum\nolimits_{l=1, l \notin \{i_{1},\cdots,i_{k}\}}^{l_{2}}\Delta_{l})\bigg)\bigg(\prod_{l_{3}=i_{2}}^{i_{3}-2}(l_{3}+\sum\nolimits_{l=1,l \notin \{i_{1},\cdots,i_{k}\}}^{l_{3}}\Delta_{l})\bigg)\cdots\nonumber\\[8pt]
&\hspace{0.7cm}\cdots\bigg(\prod_{l_{k}=i_{k-1}}^{i_{k}-2}(l_{k}+\sum\nolimits_{l=1,l\notin \{i_{1},\cdots,i_{k}\}}^{l_{k}}\Delta_{l})\bigg)\bigg(\prod_{l_{k+1}=i_{k}}^{t-1}(l_{k+1}+\sum\nolimits_{l=1,l\notin \{i_{1},\cdots,i_{k}\}}^{l_{k+1}}\Delta_{l})\bigg)\Bigg]\nonumber\\[8pt]
&=\sum\limits_{(i_{1},i_{2},\cdots,i_{k})\in \mathcal{A}_{j,k}^{(t)}} \frac{\prod\nolimits_{a=1}^{k}(1+\sum\nolimits_{b=1}^{a-1}\Delta_{i_{b}})\prod\nolimits_{p=j,p \notin \{i_{1},\cdots,i_{k}\}}^{t}((\hspace{0.03cm}p-1)+ \sum\nolimits_{l=1,l \notin \{i_{1},\cdots,i_{k}\}}^{p-1}\Delta_{l})}{\prod_{n=j-1}^{t-1}((n+1)+\sum\nolimits_{m=1}^{n}\Delta_{m})}.\nonumber\\[8pt]
\end{align*}
Therefore, \eqref{eqn:cdf} holds for $1\leq k \leq t-j+1$. We determine $P(N_{j,t}=0)$ as follows:
\begin{align}\label{eqn:cdf_kzero}
P(N_{j,t}=0) &= P(Z_{j,j}=0,Z_{j,j+1}=0,\cdots,Z_{j,t-1}=0,Z_{j,t}=0)\nonumber\\
&=P(Z_{j,j}=0)\prod\limits_{n=j+1}^{t}P(Z_{j,n}=0\hspace{0.05cm}|\hspace{0.05cm}Z_{j,j}=0,Z_{j,j+1}=0,\cdots,Z_{j,n-1}=0).
\end{align}
Now, using \eqref{eqn:cond_prob} for the conditional probabilities in \eqref{eqn:cdf_kzero} we obtain
\begin{align*}
P(N_{j,t}=0) & = \Big(1-\frac{1}{j + \sum\nolimits_{m=1}^{j-1}\Delta_{m}}\Big)\Big(1-\frac{1}{j+1 +  \sum\nolimits_{m=1}^{j}\Delta_{m}}\Big)\cdots\Big(1-\frac{1}{t +  \sum\nolimits_{m=1}^{t-1}\Delta_{m}}\Big)\nonumber\\
& = \frac{\prod\nolimits_{p=j}^{t}((p-1)+\sum\nolimits_{l=1}^{p-1}\Delta_{l})}{\prod\nolimits_{n=j-1}^{t-1}((n+1) + \sum\nolimits_{m=1}^{n}\Delta_{m})}.
\end{align*}
Hence \eqref{eqn:cdf} holds for $k=0$.
\end{proof}
The analytic formula obtained in \eqref{eqn:cdf} is quite involved when the reinforcement parameter $\Delta_{t}$ is time-varying. For the special case of $\Delta_{t}=\Delta$ for all $t\geq 1$, Theorem \ref{thm:main_result} reduces to the following corollary.
\begin{corollary}\label{thm:main_result_delta}
Fix $t\geq 1$. For a color $c_{j}$, $1\leq j\leq t$ and $\Delta_{n}=\Delta$ for all $n\geq 1$, the marginal probability for $N_{j,t}$ is given by:
\begin{align}\label{eqn:cdf_delta}
&P(N_{j,t}=k)=\nonumber\\\\
&\begin{cases}\sum\limits_{(i_{1},i_{2},\cdots,i_{k})\in \mathcal{A}_{j,k}^{(t)}} \hspace{-0.8cm}\frac{\prod\limits_{a=1}^{k}\big(1+(a-1)\Delta \big)\prod\limits_{p=j,p \notin \{i_{1},\cdots,i_{k}\}}^{t}\big((p\hspace{0.05cm}-\hspace{0.05cm}1)(\Delta+1)-\Delta\sum\limits_{l=1}^{k}\mathbbm{1}(i_{l}\leq p\hspace{0.05cm}-\hspace{0.05cm}1)\big)}{\prod\limits_{n=j-1}^{t-1}\big((\Delta+1)n+1\big)} & \textrm{for } 1\leq k\leq t-j+1\nonumber\\\\
\hspace{2.5cm}\frac{\prod\limits_{p=j}^{t}(p\hspace{0.05cm}-\hspace{0.05cm}1)(\Delta+1)}{\prod\limits_{n=j-1}^{t-1}\big((\Delta+1)n+1\big)} & \textrm{for } k=0,
\end{cases}
\end{align}
 where the set $\mathcal{A}_{j,k}^{(t)}$ is defined in \eqref{eqn:A} and $\mathbbm{1}(\mathcal{E})$ is the indicator function of the event $\mathcal{E}$.
\end{corollary}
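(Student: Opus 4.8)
The plan is to directly specialize the general formula from Theorem \ref{thm:main_result} by setting $\Delta_n = \Delta$ for every $n \geq 1$ and simplifying the resulting products and nested sums factor by factor. Since the support set $\mathcal{A}_{j,k}^{(t)}$ depends only on $j$, $k$, and $t$ and not on the reinforcement parameters, it carries over verbatim, so the whole task reduces to rewriting the three building blocks of the summand in \eqref{eqn:cdf}.

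First I would handle the denominator: each factor $(n+1) + \sum_{m=1}^{n}\Delta_m$ collapses, under $\Delta_m = \Delta$, to $(n+1) + n\Delta = (\Delta+1)n + 1$, giving $\prod_{n=j-1}^{t-1}((\Delta+1)n+1)$. The first numerator product is equally immediate: the inner sum $\sum_{b=1}^{a-1}\Delta_{i_b}$ becomes $(a-1)\Delta$, so each factor reduces to $1 + (a-1)\Delta$. The $k=0$ case is treated in the same spirit, since each factor $(p-1) + \sum_{l=1}^{p-1}\Delta_l$ becomes $(p-1) + (p-1)\Delta = (p-1)(\Delta+1)$, matching the second branch of \eqref{eqn:cdf_delta} after dividing by the simplified denominator.

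The only step requiring genuine care -- and the one I expect to be the crux -- is the second numerator product, whose factors involve the restricted sum $\sum_{l=1,\, l\notin\{i_1,\cdots,i_k\}}^{p-1}\Delta_l$. Here I would argue by counting: with $\Delta_l=\Delta$ constant, this sum equals $\Delta$ times the number of integers in $\{1,\dots,p-1\}$ that avoid the excluded set $\{i_1,\dots,i_k\}$. Since $\{1,\dots,p-1\}$ has $p-1$ elements and exactly $\sum_{l=1}^{k}\mathbbm{1}(i_l \leq p-1)$ of the excluded indices fall in this range, the count is $(p-1) - \sum_{l=1}^{k}\mathbbm{1}(i_l \leq p-1)$. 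Substituting back, the factor becomes
\[
(p-1) + \Delta\Big((p-1) - \sum_{l=1}^{k}\mathbbm{1}(i_l\leq p-1)\Big) = (p-1)(\Delta+1) - \Delta\sum_{l=1}^{k}\mathbbm{1}(i_l\leq p-1),
\]
which is exactly the factor appearing in \eqref{eqn:cdf_delta}. Assembling the three simplified blocks then yields the claimed expression for both the $1\leq k\leq t-j+1$ and $k=0$ branches, completing the argument; no probabilistic input beyond Theorem \ref{thm:main_result} is needed, as this is purely a simplification of a known closed form.
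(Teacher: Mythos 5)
Your proposal is correct and matches the paper's (implicit) argument: the corollary is presented there as a direct specialization of Theorem \ref{thm:main_result} with $\Delta_n=\Delta$, and your factor-by-factor simplification—including the counting argument that turns the restricted sum $\sum_{l=1,\,l\notin\{i_1,\dots,i_k\}}^{p-1}\Delta_l$ into $\Delta\big((p-1)-\sum_{l=1}^{k}\mathbbm{1}(i_l\leq p-1)\big)$—is exactly the computation needed.
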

The analytical expression in~\eqref{eqn:cdf_delta} can be further simplified for the case of $\Delta_{t}=1$, $t\geq 1$, as follows.
\begin{corollary}\label{thm:main_result_one}
Fix $t\geq 1$. For a color $c_{j}$, $1\leq j\leq t$ and $\Delta_{n}=1$ for all $n\geq 1$, the marginal probability for $N_{j,t}$ is given by:
\begin{align}\label{eqn:pdf_one}
P(N_{j,t}=k)=\begin{cases}\sum\limits_{(i_{1},i_{2},\cdots,i_{k})\in \mathcal{A}_{j,k}^{(t)}}\frac{\Gamma(k+1)\big(\prod\limits_{p=j}^{t}(2(p-1)\hspace{0.01cm}-\sum\limits_{l=1}^{k}\mathbbm{1}(i_{l}\leq p\hspace{0.05cm}-\hspace{0.05cm}1)\big)}{\prod\limits_{n=j-1}^{t-1}(2n+1)} & \textrm{for } 1\leq k\leq t-j+1\\\\
\hspace{2.5cm}\frac{2\Gamma(t+1)}{\Gamma(j-1)\prod\limits_{n=j-1}^{t-1}(2n+1)} & \textrm{for } k=0,
\end{cases}
\end{align}
where $\Gamma(\cdot)$ is the gamma function.
\end{corollary}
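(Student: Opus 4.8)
The plan is to obtain \eqref{eqn:pdf_one} as a direct specialization of Corollary \ref{thm:main_result_delta}: I would substitute $\Delta_n = \Delta = 1$ into \eqref{eqn:cdf_delta} and simplify each factor, treating the two branches $1 \le k \le t-j+1$ and $k=0$ separately. (Equivalently, one may set $\Delta_n = 1$ directly in Theorem \ref{thm:main_result}, which yields the same factors with marginally less algebra, since then $\sum_{b=1}^{a-1}\Delta_{i_b}=a-1$ and $\sum_{l=1}^{p-1}\Delta_l = p-1$.) No new probabilistic argument is required; the entire content is the algebraic collapse of the products once every reinforcement equals one, so the proof is a computation rather than a proof of a genuinely new fact.

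For the branch $1 \le k \le t-j+1$ I would simplify the three ingredients of the summand in \eqref{eqn:cdf_delta}. First, the ascending product becomes $\prod_{a=1}^{k}\bigl(1+(a-1)\cdot 1\bigr) = \prod_{a=1}^{k} a = k! = \Gamma(k+1)$, which is the source of the $\Gamma(k+1)$ prefactor in \eqref{eqn:pdf_one}. Second, each denominator factor reduces to $(\Delta+1)n+1 = 2n+1$, so $\prod_{n=j-1}^{t-1}(2n+1)$ is unchanged. Third, each factor of the remaining product specializes to $(p-1)(\Delta+1) - \Delta\sum_{l=1}^{k}\mathbbm{1}(i_l \le p-1) = 2(p-1) - \sum_{l=1}^{k}\mathbbm{1}(i_l \le p-1)$, with $p$ ranging over $\{j,\dots,t\}\setminus\{i_1,\dots,i_k\}$; summing the resulting products over $\mathcal{A}_{j,k}^{(t)}$ from \eqref{eqn:A} gives the stated summand. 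The one point demanding care is retaining the exclusion $p \notin \{i_1,\dots,i_k\}$ inherited from \eqref{eqn:cdf_delta}, since the spurious factor at any $p=i_m$ would otherwise overcount (a quick check at $j=t=2$, $k=1$ confirms that the excluded product must be empty to recover $P(N_{2,2}=1)=1/3$).

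For the branch $k=0$ I would start from $\prod_{p=j}^{t}(p-1)(\Delta+1)$ in \eqref{eqn:cdf_delta}, set $\Delta=1$ to obtain $\prod_{p=j}^{t} 2(p-1) = 2^{\,t-j+1}\prod_{p=j}^{t}(p-1)$, and then rewrite the telescoping integer product of consecutive integers as a ratio of Gamma functions through $\prod_{p=j}^{t}(p-1) = (j-1)(j)\cdots(t-1) = \Gamma(t)/\Gamma(j-1)$; dividing by $\prod_{n=j-1}^{t-1}(2n+1)$ then produces the $k=0$ entry. I expect this last step — converting the finite product of consecutive integers into $\Gamma(t)/\Gamma(j-1)$ while correctly tracking the power of two $2^{\,t-j+1}$ — to be the main (and essentially only) obstacle, along with the convention-level observation that for $j=1$ the factor $\Gamma(j-1)=\Gamma(0)$ diverges, forcing $P(N_{1,t}=0)=0$, which is consistent with the deterministic first draw of colour $c_1$. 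As a closing sanity check I would verify that the two branches still sum to one over $0 \le k \le t-j+1$; this is automatic, since \eqref{eqn:cdf_delta} is already a legitimate probability mass function and the substitution $\Delta=1$ is an identity that preserves normalization.
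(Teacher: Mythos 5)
Your approach is exactly the one the paper intends: Corollary \ref{thm:main_result_one} is stated there with no separate proof, as a direct specialization of Corollary \ref{thm:main_result_delta} at $\Delta=1$, and your simplifications for the branch $1\le k\le t-j+1$ are correct. In particular you are right to insist on retaining the exclusion $p\notin\{i_1,\dots,i_k\}$: the factor $\prod_{a=1}^{k}(1+(a-1)\Delta)$ collapses to $k!=\Gamma(k+1)$, each denominator factor becomes $2n+1$, and the remaining product must still run only over $p\in\{j,\dots,t\}\setminus\{i_1,\dots,i_k\}$ exactly as in \eqref{eqn:cdf_delta}; your check $P(N_{2,2}=1)=1/3$ confirms this, and the printed version of \eqref{eqn:pdf_one}, which writes $\prod_{p=j}^{t}$ without the exclusion, must be read with that restriction.

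The genuine gap is in the $k=0$ branch. Your computation up to the Gamma-function rewriting is sound: setting $\Delta=1$ in \eqref{eqn:cdf_delta} gives the numerator $\prod_{p=j}^{t}2(p-1)=2^{t-j+1}\prod_{p=j}^{t}(p-1)=2^{t-j+1}\Gamma(t)/\Gamma(j-1)$. But this does \emph{not} ``produce the $k=0$ entry'' of \eqref{eqn:pdf_one}: the printed numerator is $2\Gamma(t+1)=2\,t!$, and the identity $2^{t-j+1}\Gamma(t)=2\Gamma(t+1)$ would require $2^{t-j}=t$, which fails in general. Indeed at $j=t=2$ the printed formula evaluates to $2\Gamma(3)/(\Gamma(1)\cdot 3)=4/3>1$, whereas the correct value is $P(N_{2,2}=0)=2/3=2^{1}\Gamma(2)/(\Gamma(1)\cdot 3)$. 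So your derivation is correct, but the final identification with the stated expression is asserted rather than verified, and it is false: you should have flagged that the specialization actually yields $2^{t-j+1}\Gamma(t)\big/\big(\Gamma(j-1)\prod_{n=j-1}^{t-1}(2n+1)\big)$ and that the corollary's printed $k=0$ formula needs correcting accordingly. Your observation that the $\Gamma(j-1)$ in the denominator forces $P(N_{1,t}=0)=0$ when $j=1$ remains valid for the corrected expression, since the factor $(p-1)$ at $p=1$ already annihilates the numerator.
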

As we will see in Section~\ref{sec:simulations}, our model with $\Delta_{t}=1$ for all $t\geq 0$ has exactly the same mechanism as the Barab\'{a}si-Albert model (except for the initialization). Therefore, \eqref{eqn:pdf_one} can be used to predict the degree count of vertices in the Barab\'{a}si-Albert model for sufficiently large $t$. To illustrate~\eqref{eqn:pdf_one}, we present a simulation in Figure~\ref{fig:main_result_one} for the probability mass function of the counting random variable $N_{2,12}$ for the case of $\Delta_{t}=1$, $1\leq t\leq 12$.

\begin{figure}
    \centering
    \includegraphics[scale=0.5]{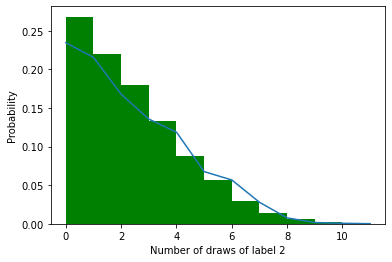}
    \caption{A simulation of the probability distribution given by \eqref{eqn:pdf_one} in Corollary \ref{thm:main_result_one} for the case of $\Delta_{t}=1$ with $1\leq t\leq 12$. 
    A normalized histogram 
    of the counting random variable $N_{2,12}$ from our model is plotted (by averaging over $1000$ simulations) and is shown to concord with the curve of~\eqref{eqn:pdf_one} (in blue).}
    \label{fig:main_result_one}
\end{figure}

Since the first time instant at which a $c_{j}$ color ball can be drawn from the P\'{o}lya urn is at time $j$, the total number of draws of a $c_{j}$ color ball till time $t$ can be at most $t-j+1$. Therefore, 
\[\sum\limits_{k=0}^{t-j+1}P(d_{j,t}=k+1)= \sum\limits_{k=0}^{t-j+1}P(N_{j,t}=k) =1,\]
which implies that $P(N_{j,t}=k)$ is a probability mass function on the support set $\{0,1,\cdots,t-j+1\}$. We next only verify that $P(N_{j,t}=k)$ obtained in \eqref{eqn:cdf_delta} does indeed sum up to one (over $k$ ranging from zero to $t-j+1$) and is hence a legitimate probability mass function. For simplicity, we focus on the case with $\Delta_{t}=\Delta$; the proof for the general case follows along similar lines. To this end, we write the set $\mathcal{A}_{j,k}^{(t)}$ as the following disjoint union:
\begin{align}\label{eqn:set}
\mathcal{A}_{j,k}^{(t)} &= \{(i_{1},i_{2},\cdots, i_{k})\hspace{0.1cm}|\hspace{0.1cm} j \leq i_{1}< i_{2}<\cdots < i_{k}\leq t\}\nonumber\\
 &= \{(i_{1},i_{2},\cdots, i_{k})\hspace{0.1cm}|\hspace{0.1cm}  j \leq i_{1}< i_{2}<\cdots < i_{k}\leq t-1\} \sqcup \mathcal{B}_{j,k}^{(t)}= \mathcal{A}_{j,k}^{(t-1)} \sqcup \mathcal{B}_{j,k}^{(t)},
\end{align}
where $\mathcal{B}_{j,k}^{(t)}: = \{(i_{1},\cdots,i_{k-1},t\hspace{0.05cm})\hspace{0.1cm}|\hspace{0.1cm} j \leq i_{1}<\cdots<i_{k-1}\leq t-1\}$. Note that 
\begin{align}\label{eqn:setB}
\mathcal{B}_{j,k}^{(t)} = \{(i_{1},\cdots,i_{k-1},t\hspace{0.05cm})\hspace{0.1cm}|\hspace{0.1cm} (i_{1},\cdots,i_{k-1})\in \mathcal{A}_{j,k-1}^{(t-1)}\}.
\end{align}
\begin{theorem}
Fix $t\geq1$. For a $c_{j}$, $1\leq j \leq t$ and $\Delta_{t}=\Delta$ for all $t\geq 1$, we have that:
\begin{align}\label{eqn:pdf}
\sum\limits_{k=0}^{t-j+1}P(N_{j,t}=k) =1.
\end{align}   
\end{theorem}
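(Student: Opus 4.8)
The plan is to argue by induction on $t$, with the fixed color index $j$ as the base of the recursion. The engine of the proof is a two-term recursion relating $P(N_{j,t}=k)$ to $P(N_{j,t-1}=k)$ and $P(N_{j,t-1}=k-1)$, which is exactly what the disjoint decomposition $\mathcal{A}_{j,k}^{(t)}=\mathcal{A}_{j,k}^{(t-1)}\sqcup\mathcal{B}_{j,k}^{(t)}$ in \eqref{eqn:set}--\eqref{eqn:setB} is designed to produce.

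First I would split the sum defining $P(N_{j,t}=k)$ in \eqref{eqn:cdf_delta} according to this decomposition. For a tuple $(i_1,\dots,i_k)\in\mathcal{A}_{j,k}^{(t-1)}$ (the case $i_k\neq t$), every index satisfies $i_l\le t-1$, so the numerator factor indexed by $p=t$ equals $(t-1)(\Delta+1)-k\Delta$, while the remaining numerator factors together with $\prod_{a=1}^k(1+(a-1)\Delta)$ are precisely those appearing in $P(N_{j,t-1}=k)$. For a tuple in $\mathcal{B}_{j,k}^{(t)}$ (the case $i_k=t$), the index $p=t$ is omitted from the product, each remaining $p\le t-1$ sees $\mathbbm 1(i_k\le p-1)=0$, and the only new numerator contribution is the single factor $1+(k-1)\Delta$ from $a=k$; what is left is exactly the numerator for $P(N_{j,t-1}=k-1)$. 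Since the denominator in \eqref{eqn:cdf_delta} satisfies $\prod_{n=j-1}^{t-1}((\Delta+1)n+1)=\big((\Delta+1)(t-1)+1\big)\prod_{n=j-1}^{t-2}((\Delta+1)n+1)$, collecting the two contributions yields
\begin{align}\label{eqn:recursion}
P(N_{j,t}=k)=\frac{(t-1)(\Delta+1)-k\Delta}{(\Delta+1)(t-1)+1}\,P(N_{j,t-1}=k)+\frac{(k-1)\Delta+1}{(\Delta+1)(t-1)+1}\,P(N_{j,t-1}=k-1),
\end{align}
which also reproduces the $k=0$ case (where the second term drops) and is consistent with $P(N_{j,t-1}=k)=0$ once $k$ exceeds the support bound $t-j$.

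The base case $t=j$ is a direct evaluation: the support is $\{0,1\}$, and \eqref{eqn:cdf_delta} gives $P(N_{j,j}=0)=\frac{(j-1)(\Delta+1)}{(\Delta+1)(j-1)+1}$ and $P(N_{j,j}=1)=\frac{1}{(\Delta+1)(j-1)+1}$ (the only tuple is $(j)$, so both the index product over $p\neq j$ and the factor $1+(a-1)\Delta$ at $a=1$ reduce to $1$), and these sum to one. For the inductive step I would assume $\sum_{k=0}^{t-j}P(N_{j,t-1}=k)=1$, sum \eqref{eqn:recursion} over $k$ from $0$ to $t-j+1$, and separate the result into two pieces. Writing $S:=(\Delta+1)(t-1)+1$, the first piece contributes $\tfrac1S\big((t-1)(\Delta+1)\sum_k P(N_{j,t-1}=k)-\Delta\sum_k kP(N_{j,t-1}=k)\big)$, and after re-indexing $k\mapsto k-1$ the second piece contributes $\tfrac1S\big(\sum_k P(N_{j,t-1}=k)+\Delta\sum_k kP(N_{j,t-1}=k)\big)$.

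The decisive observation is that the two first-moment sums $\sum_k kP(N_{j,t-1}=k)$ cancel exactly, so the expectation $E[N_{j,t-1}]$ never needs to be computed; what remains is $\tfrac1S\big((t-1)(\Delta+1)+1\big)$ times $\sum_k P(N_{j,t-1}=k)=1$, and since $(t-1)(\Delta+1)+1=S$ this equals $1$, closing the induction. The only step requiring genuine care — and the one I would expect to be the main obstacle — is the bookkeeping in deriving \eqref{eqn:recursion}: one must verify that peeling off the $p=t$ factor in the $i_k\neq t$ case and peeling off the $a=k$ factor in the $i_k=t$ case leaves behind exactly the $(t-1)$-level numerators, with no stray dependence on the omitted indices. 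Once \eqref{eqn:recursion} is established, the summation over $k$ and the cancellation of the mean are routine.
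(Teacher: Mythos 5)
Your argument is correct, and it rests on the same two ingredients as the paper's proof --- induction on $t$ with $j$ fixed (the paper phrases it as induction on $t-j+1$, with the identical base case $t=j$) and the disjoint decomposition $\mathcal{A}_{j,k}^{(t)}=\mathcal{A}_{j,k}^{(t-1)}\sqcup\mathcal{B}_{j,k}^{(t)}$ of \eqref{eqn:set}--\eqref{eqn:setB} --- but it is organized around a pivot the paper never makes explicit: the two-term recursion expressing $P(N_{j,t}=k)$ in terms of $P(N_{j,t-1}=k)$ and $P(N_{j,t-1}=k-1)$. The paper instead starts from the target product $\prod_{n=j-1}^{t-1}((\Delta+1)n+1)$, multiplies the induction hypothesis through by the new factor, and performs an add-and-subtract of $\Delta k$ inside the double sum, regrouping term by term through its steps $(a)$--$(h)$; your cancellation of the two first-moment sums $\Delta\sum_k kP(N_{j,t-1}=k)$ is precisely that $\pm\Delta k$ manipulation, repackaged. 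What your route buys is economy and interpretability: once the recursion is in hand, the summation over $k$ takes three lines, and the recursion itself admits a probabilistic proof that bypasses the combinatorial bookkeeping you flag as the main obstacle. Indeed $N_{j,t}=N_{j,t-1}+Z_{j,t}$, and by \eqref{eqn:cond_prob} with constant $\Delta$ the conditional probability of drawing $c_j$ at time $t$ given the entire past equals $(1+k\Delta)/\big((\Delta+1)(t-1)+1\big)$ on the event $\{N_{j,t-1}=k\}$, i.e.\ it depends on the past draws of color $c_j$ only through their number; your recursion is then the law of total probability, with $1-(1+k\Delta)/\big((\Delta+1)(t-1)+1\big)=\big((t-1)(\Delta+1)-k\Delta\big)/\big((\Delta+1)(t-1)+1\big)$ giving the first coefficient. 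Your combinatorial derivation of the recursion directly from \eqref{eqn:cdf_delta} is also sound as sketched: on $\mathcal{A}_{j,k}^{(t-1)}$ the $p=t$ factor equals $(t-1)(\Delta+1)-k\Delta$ because every $i_l\le t-1$, while on $\mathcal{B}_{j,k}^{(t)}$ the factor $1+(k-1)\Delta$ peels off from the $a=k$ term and $\mathbbm{1}(i_k\le p-1)=0$ for all remaining $p\le t-1$; the degenerate cases $k=0$, $k=t-j+1$, and $j=1$ (where one of the two sets in the decomposition is empty or the corresponding probability vanishes) are all handled consistently by your conventions.
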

\begin{proof}
We write the left-hand side of \eqref{eqn:pdf} using \eqref{eqn:cdf_delta}:
\begin{align}\label{eqn:general_case}
&\frac{\prod\nolimits_{p=j}^{t}(p-1)(\Delta+1)}{\prod_{n=j-1}^{t-1}((\Delta+1)n+1)} \nonumber\\
&+\sum\limits_{k=1}^{t-j+1}\sum\limits_{(i_{1},\cdots,i_{k})\in \mathcal{A}_{j,k}^{(t)}}\frac{ \prod\limits_{a=1}^{k}(1+(a-1)\Delta)\prod\limits_{p=j,p \notin \{i_{1},\cdots,i_{k}\}}^{t}((\hspace{0.03cm}p-1)(\Delta+1)-\Delta\sum\nolimits_{l=1}^{k}\mathbbm{1}(i_{l}\leq p-1))}{\prod_{n=j-1}^{t-1}((\Delta+1)n+1)}.
\end{align}
Therefore, showing that \eqref{eqn:pdf} holds is equivalent to showing that:
\begin{align}\label{eqn:to_show}
&\prod\limits_{p=j}^{t}(p-1)(\Delta+1) + \sum\limits_{k=1}^{t-j+1}\hspace{-0.1cm}\sum\limits_{(i_{1},\cdots,i_{k})\in \mathcal{A}_{j,k}^{(t)}}\hspace{-0.2cm}\bigg(\hspace{-0.1cm}\prod\nolimits_{a=1}^{k}\big(1+(a-1)\Delta\big)\hspace{-0.8cm}\prod\limits_{p=j,p \notin \{i_{1},\cdots,i_{k}\}}^{t}\hspace{-0.8cm}\big((\hspace{0.03cm}p-1)(\Delta+1)-\Delta\sum\limits_{l=1}^{k}\mathbbm{1}(i_{l}\leq p-1)\big)\hspace{-0.1cm}\bigg)\nonumber\\
&\hspace{0.5cm}= \prod_{n=j-1}^{t-1}((\Delta+1)n+1).
\end{align}
 We prove \eqref{eqn:to_show} by induction on $t-j+1\ge 1$.

\medskip

\noindent
\underline{Base Case:} $t-j+1 = 1$ or $t=j$. For this case, the left-hand side of \eqref{eqn:to_show} is the following:
\begin{align*}
& \prod\limits_{p=j}^{j}(p-1)(\Delta+1)+ \sum\limits_{\mathcal{A}_{j,1}^{(j)}}\hspace{-0.1cm}\bigg(\hspace{-0.1cm}\prod\nolimits_{a=1}^{1}\big(1+(a-1)\Delta\big)\prod\limits_{p=j,p \hspace{0.02cm}\neq \hspace{0.02cm}j}^{j}\big((\hspace{0.03cm}p-1)(\Delta+1)-\Delta\sum\limits_{l=1}^{1}\mathbbm{1}(i_{l}\leq p-1)\big)\hspace{-0.1cm}\bigg)\\
&\hspace{0.5cm}= \hspace{0.02cm}(\hspace{0.08cm}j-1)(\Delta+1) + 1
\end{align*}
which, upon simplification and noting that the set $\mathcal{A}_{j,1}^{(j)}=\{j\}$, equals the right hand side of \eqref{eqn:to_show} for $t-j+1=1$. 

\medskip

\noindent
\underline{Induction Step:} 
We now show the induction step: assuming that \eqref{eqn:to_show} is true for $t-j+1 = s$, we show that it holds for $t-j+1=s+1$.
We thus assume that the following holds:
\begin{align}\label{eqn:assumption}
&\prod\limits_{p=j}^{j+s-1}((p-1)(\Delta+1)) +  \sum\limits_{k=1}^{s}\sum\limits_{\mathcal{A}_{j,k}^{(j+s-1)}}\bigg(\prod\limits_{a=1}^{k}\big(1+(a-1)\Delta\big)\hspace{-0.6cm}\prod\limits_{p=j,p \notin \{i_{1},\cdots,i_{k}\}}^{j+s-1}\hspace{-0.6cm}\big((\hspace{0.03cm}p-1)(\Delta+1)-\Delta\sum\limits_{l=1}^{k}\mathbbm{1}(i_{l}\leq p-1)\big)\bigg)\nonumber\\
&\hspace{0.5cm}= \prod_{n=j-1}^{j+s-2}((\Delta+1)n+1).
\end{align}
We next show the induction step using \eqref{eqn:assumption}, by starting from the right-hand side:
\begin{align*}
&\prod\limits_{n=j-1}^{j+s-1}((\Delta+1)n+1) = ((\Delta+1)(s+j-1)+1)\prod\limits_{n=j-1}^{j+s-2}((\Delta+1)n+1)\\
&\overset{(a)}{=} ((\Delta+1)(s+j-1)+1)\prod\limits_{p=j}^{j+s-1}(p-1)(\Delta+1) \\
& +\sum\limits_{k=1}^{s}((\Delta+1)(s+j-1)+1)\sum\limits_{\mathcal{A}_{j,k}^{(j+s-1)}}\bigg(\prod\limits_{a=1}^{k}\big(1+(a-1)\Delta\big)\hspace{-0.7cm}\prod\limits_{p=j,p \notin \{i_{1},\cdots,i_{k}\}}^{j+s-1}\hspace{-0.7cm}\big((\hspace{0.03cm}p-1)(\Delta+1)-\Delta\sum\limits_{l=1}^{k}\mathbbm{1}(i_{l}\leq p-1)\big)\bigg)\\
& \overset{(b)}{=} \prod_{p=j}^{j+s}(p-1)(\Delta+1) \quad + \quad \prod_{p=j}^{j+s-1}(p-1)(\Delta+1) \\ 
&+\sum\limits_{k=1}^{s}((\Delta+1)(s+j-1)-\Delta k + \Delta k+1)\hspace{-0.2cm}\sum\limits_{\mathcal{A}_{j,k}^{(j+s-1)}}\hspace{-0.2cm}\bigg(\hspace{-0.1cm}\prod\limits_{a=1}^{k}\hspace{-0.1cm}\big(1+(a-1)\Delta\big)\hspace{-0.8cm}\prod\limits_{p=j,p \notin \{i_{1},\cdots,i_{k}\}}^{j+s-1}\hspace{-0.8cm}\big((\hspace{0.03cm}p-1)(\Delta+1)-\Delta\sum\limits_{l=1}^{k}\mathbbm{1}(i_{l}\leq p-1)\big)\hspace{-0.1cm}\bigg)\\
& \overset{(c)}{=} \prod_{p=j}^{j+s}(p-1)(\Delta+1) \quad + \quad \prod_{p=j}^{j+s-1}(p-1)(\Delta+1)\\ 
&+\sum\limits_{k=1}^{s}((\Delta+1)(s+j-1)-\Delta k)\sum\limits_{\mathcal{A}_{j,k}^{(j+s-1)}}\hspace{-0.2cm}\bigg(\prod\limits_{a=1}^{k}\big(1+(a-1)\Delta\big)\hspace{-0.7cm}\prod\limits_{p=j,p \notin \{i_{1},\cdots,i_{k}\}}^{j+s-1}\hspace{-0.7cm}\big((\hspace{0.03cm}p-1)(\Delta+1)-\Delta\sum\limits_{l=1}^{k}\mathbbm{1}(i_{l}\leq p-1)\big)\hspace{-0.1cm}\bigg) \\
&+\sum\limits_{k=1}^{s}(\Delta k+1)\sum\limits_{\mathcal{A}_{j,k}^{(j+s-1)}}\hspace{-0.2cm}\bigg(\prod\limits_{a=1}^{k}\big(1+(a-1)\Delta\big)\hspace{-0.7cm}\prod\limits_{p=j,p \notin \{i_{1},\cdots,i_{k}\}}^{j+s-1}\hspace{-0.7cm}\big((\hspace{0.03cm}p-1)(\Delta+1)-\Delta\sum\limits_{l=1}^{k}\mathbbm{1}(i_{l}\leq p-1)\big)\hspace{-0.1cm}\bigg)\\
&\overset{(d)}{=}\prod_{p=j}^{j+s}(p-1)(\Delta+1) \quad + \quad \prod_{p=j}^{j+s-1}(p-1)(\Delta+1) \\ 
&+\sum\limits_{k=1}^{s}\sum\limits_{\mathcal{A}_{j,k}^{(j+s-1)}}\hspace{-0.2cm}\bigg(\prod\limits_{a=1}^{k}\big(1+(a-1)\Delta\big)\hspace{-0.7cm}\prod\limits_{p=j,p \notin \{i_{1},\cdots,i_{k}\}}^{j+s}\hspace{-0.7cm}\big((\hspace{0.03cm}p-1)(\Delta+1)-\Delta\sum\limits_{l=1}^{k}\mathbbm{1}(i_{l}\leq p-1)\big)\hspace{-0.1cm}\bigg)\\
&+\sum\limits_{k=1}^{s}\sum\limits_{\mathcal{A}_{j,k}^{(j+s-1)}}\hspace{-0.2cm}\bigg(\prod\limits_{a=1}^{k+1}\big(1+(a-1)\Delta\big)\hspace{-0.7cm}\prod\limits_{p=j,p \notin \{i_{1},\cdots,i_{k}\}}^{j+s-1}\hspace{-0.7cm}\big((\hspace{0.03cm}p-1)(\Delta+1)-\Delta\sum\limits_{l=1}^{k}\mathbbm{1}(i_{l}\leq p-1)\big)\hspace{-0.1cm}\bigg)\\
& \overset{(e)}{=}\prod_{p=j}^{j+s}(p-1)(\Delta+1) \quad + \quad \prod_{p=j}^{j+s-1}(p-1)(\Delta+1) \\ 
&+\sum\limits_{k=1}^{s}\sum\limits_{\mathcal{A}_{j,k}^{(j+s-1)}}\hspace{-0.2cm}\bigg(\prod\limits_{a=1}^{k}\big(1+(a-1)\Delta\big)\hspace{-0.7cm}\prod\limits_{p=j,p \notin \{i_{1},\cdots,i_{k}\}}^{j+s}\hspace{-0.7cm}\big((\hspace{0.03cm}p-1)(\Delta+1)-\Delta\sum\limits_{l=1}^{k}\mathbbm{1}(i_{l}\leq p-1)\big)\hspace{-0.1cm}\bigg)\\
&+\sum\limits_{k=2}^{s+1}\sum\limits_{\mathcal{A}_{j,k-1}^{(j+s-1)}}\hspace{-0.2cm}\bigg(\prod\limits_{a=1}^{k}\big(1+(a-1)\Delta\big)\hspace{-0.7cm}\prod\limits_{p=j,p \notin \{i_{1},\cdots,i_{k-1}\}}^{j+s-1}\hspace{-0.7cm}\big((\hspace{0.03cm}p-1)(\Delta+1)-\Delta\sum\limits_{l=1}^{k-1}\mathbbm{1}(i_{l}\leq p-1)\big)\hspace{-0.1cm}\bigg)\\
& \overset{(f)}{=}\prod_{p=j}^{j+s}(p-1)(\Delta+1) \quad + \quad \prod_{p=j}^{j+s-1}(p-1)(\Delta+1)\\ 
&+\sum\limits_{k=1}^{s}\sum\limits_{\mathcal{A}_{j,k}^{(j+s-1)}}\hspace{-0.2cm}\bigg(\prod\limits_{a=1}^{k}\big(1+(a-1)\Delta\big)\hspace{-0.7cm}\prod\limits_{p=j,p \notin \{i_{1},\cdots,i_{k}\}}^{j+s}\hspace{-0.7cm}\big((\hspace{0.03cm}p-1)(\Delta+1)-\Delta\sum\limits_{l=1}^{k}\mathbbm{1}(i_{l}\leq p-1)\big)\hspace{-0.1cm}\bigg)\\
&+\sum\limits_{k=2}^{s+1}\sum\limits_{\mathcal{B}_{j,k}^{(j+s)}}\bigg(\prod\limits_{a=1}^{k}\big(1+(a-1)\Delta\big)\hspace{-0.7cm}\prod\limits_{p=j,p \notin \{i_{1},\cdots,i_{k-1},j+s\}}^{j+s}\hspace{-0.3cm}\big((\hspace{0.03cm}p-1)(\Delta+1)-\Delta\sum\limits_{l=1}^{k-1}\mathbbm{1}(i_{l}\leq p-1)\big)\hspace{-0.1cm}\bigg)\\
&\overset{(g)}{=}\prod_{p=j}^{j+s}(p-1)(\Delta+1) \quad + \quad \prod_{p=j}^{j+s-1}(p-1)(\Delta+1)\\
&+\sum\limits_{\mathcal{A}_{j,1}^{(j+s-1)}}\hspace{-0.2cm}\bigg(\prod\limits_{a=1}^{1}\big(1+(a-1)\Delta\big)\hspace{-0.3cm}\prod\limits_{p=j,p \notin \{i_{1}\}}^{j+s}\hspace{-0.3cm}\big((\hspace{0.03cm}p-1)(\Delta+1)-\Delta\sum\limits_{l=1}^{1}\mathbbm{1}(i_{l}\leq p-1)\big)\hspace{-0.1cm}\bigg)\\
&+\sum\limits_{k=2}^{s}\sum\limits_{\mathcal{A}_{j,k}^{(j+s)}}\hspace{-0.1cm}\bigg(\prod\limits_{a=1}^{k}\big(1+(a-1)\Delta\big)\hspace{-0.7cm}\prod\limits_{p=j,p \notin \{i_{1},\cdots,i_{k}\}}^{j+s}\hspace{-0.7cm}\big((\hspace{0.03cm}p-1)(\Delta+1)-\Delta\sum\limits_{l=1}^{k}\mathbbm{1}(i_{l}\leq p-1)\big)\hspace{-0.1cm}\bigg)\quad + \quad\prod\limits_{a=1}^{s+1}(1+(a-1)\Delta)\\
& \overset{(h)}{=}\prod_{p=j}^{j+s}(p-1)(\Delta+1) + \sum\limits_{k=1}^{s+1}\sum\limits_{\mathcal{A}_{j,k}^{(j+s)}}\hspace{-0.1cm}\bigg(\prod\limits_{a=1}^{k}\big(1+(a-1)\Delta\big)\hspace{-0.7cm}\prod\limits_{p=j,p \notin \{i_{1},\cdots,i_{k}\}}^{j+s}\hspace{-0.7cm}\big((\hspace{0.03cm}p-1)(\Delta+1)-\Delta\sum\limits_{l=1}^{k}\mathbbm{1}(i_{l}\leq p-1)\big)\hspace{-0.1cm}\bigg).
\end{align*}
In the above set of equations, we obtain $(a)$ by substituting \eqref{eqn:assumption} in the left-hand side of $(a)$. In $(b)$, we add and subtract $\Delta k$ to the term $((\Delta+1)(s+j-1)+1)$ and split the summation across the terms $((\Delta+1)(s+j-1)-\Delta k)$ and $(\Delta k + 1)$ in $(c)$. In $(d)$, we absorb the terms $((\Delta k+1)(s+j-1)-\Delta k)$ and  $(\Delta k+1)$ into the product. We replace $k$ by $k-1$ in the fourth term on the left-hand side of $(e)$. We obtain the fourth term on the right-hand side of $(f)$ using  \eqref{eqn:setB}. On the right-hand side of $(g)$, the second term  can be written as follows:
\[\prod\limits_{p=j}^{j+s-1}(p-1)(\Delta+1) = \sum\limits_{\mathcal{B}_{j,1}^{(j+s)}}\bigg(\prod\limits_{a=1}^{1}\big(1+(a-1)\Delta\big)\hspace{-0.3cm}\prod\limits_{p=j,p \notin \{j+s\}}^{j+s}\hspace{-0.3cm}\big((\hspace{0.03cm}p-1)(\Delta+1)-\Delta\sum\limits_{l=1}^{0}\mathbbm{1}(i_{l}\leq p-1)\big)\hspace{-0.1cm}\bigg)\]
which is merged with the third term on the right-hand side of $(g)$ to obtain the $k=1$ term on the right-hand side of (h). Similarly, for the terms for $k=2$ to $k=s$, we merge both of the terms on the right-hand side of $(f)$ using \eqref{eqn:set} to obtain the fourth term on the right-hand side of $(g)$. The last term on the right-hand side of $(g)$ is evaluation of the fourth term on the right-hand side of $(f)$ at $k=s+1$. Finally $(h)$ is obtained by writing all the terms under one summation. Hence the proof follows from induction on $t-j+1$.
\end{proof}
\section{Simulation Results}\label{sec:simulations}
In this section, we present a comparative study\footnote{For details about the simulations, refer to the following link:\\
\url{https://drive.google.com/drive/folders/1uOmz4B6RQ0hRmuu_CTfb02jJ03B1SyEl?usp=share_link}} between our model and the Barab\'{a}si-Albert model in terms of following three features:
\begin{itemize}
\item Structural differences in small-sized graphs;
\item Degree distributions of the graphs obtained;
\item Expected birth time of vertices with a fixed degree.
\end{itemize}
We first illustrate the structural differences (in terms of vertex color allocation and degree)  between the graphs generated by our model and the Barab\'{a}si-Albert model. In the next set of simulations, we compare the degree distribution of both models by plotting the probability of randomly choosing a $k$ degree vertex versus $k$ (on a $\log-\log$ scale) for a graph generated until a fixed time instant. We give the degree distribution of graphs generated for $5000$ time steps (averaged over $250$ simulations) via the standard Barab\'{a}si-Albert model and our model with different choices of the reinforcement parameter $\Delta_{t}$ and discuss the similarities and differences obtained in the degree distributions. In the third set of simulations, we compare both models in terms of vertices expected birth time versus degree which we define as follows.

\begin{definition}
Given a random network/graph generated until time $t$, we define the vertices \textbf{expected birth time} for a fixed degree $k$, where $1\leq k \leq t$, as 
the expected value of all the times when the vertices which have degree $k$ at termination time $t$ were introduced. It is denoted by $\overline{b}_{t}(k)$ and is given by the following expression:
\begin{align}\label{eqn:exp_birthtime}
\overline{b}_{t}(k)= \sum\limits_{j=1}^{t} (j-1)p_{k}^{(j)}(t)   
\end{align}
where $p_{k}^{(j)}(t)$ is the probability that vertex $j$ has degree $k$ at termination time $t$. 
\end{definition}

Note that we write $(j-1)$ in~\eqref{eqn:exp_birthtime} because vertex $j$ is introduced in the network/graph at time $(j-1)$. In the experiments, we determine the empirical version of \eqref{eqn:exp_birthtime}, which we call the \textbf{average birth time} and denote it by $\mathrm{b}_{t}(k)$ for a degree $k$ and termination time $t$. It is given by

\begin{align}\label{eqn:avg_birthtime}
b_{t}(k) = \sum\limits_{j=1}^{t}(j-1) \, \frac{\mathbbm{1}\big(\sum\nolimits_{n=j}^{t}Z_{j,n} =k-1\big)}{\sum\limits_{j'=1}^{t}\mathbbm{1}\big(\sum\nolimits_{n=j'}^{t}Z_{j',n}=k-1\big)}.
\end{align}

Analysing the expected birth time for a random graph provides insights on the average ``age'' of vertices gaining a certain degree in the generated network. A high expected birth time for a high degree $k$ indicates that newer incoming vertices have (on average) accumulated more connections than older vertices. In the context of social networks, this scenario corresponds to {\em tardy influential} vertices that quickly gained more popularity over already existing ones despite being introduced later in the network. We will compute the empirical expected birth time (average birth time) for different choices of $\Delta_{t}$ and compare them with the Barab\'{a}si-Albert model later in this section.

In Figure \ref{fig:networks}, two $15$-vertex networks are depicted, one generated by our model (on the left-hand side) and the other by the Barab\'{a}si-Albert model (on the right-hand side). We make two observations: First, in contrast with the Barab\'{a}si-Albert model, in our model all vertices are labelled by distinct colors. This one-to-one correspondence between vertices and colors encodes all the information of the generated graph in the draw vectors of the underlying P\'{o}lya urn. Second, the maximum degree achieved is higher in the left-hand side network generated via our model (which achieves a maximum degree of $11$ compared to $6$ in the right-hand side Barab\'{a}si-Albert network). 
This happens along one sample path due to the choice of reinforcement parameter (here $\Delta_{t}=5$ for all $t\geq 1$) in our model which allows for an already selected vertex to be chosen with a higher probability than in the case of
the Barab\'{a}si-Albert model 
where the vertices are chosen proportional to their degree. In fact, this property is observed to persist in our simulations.
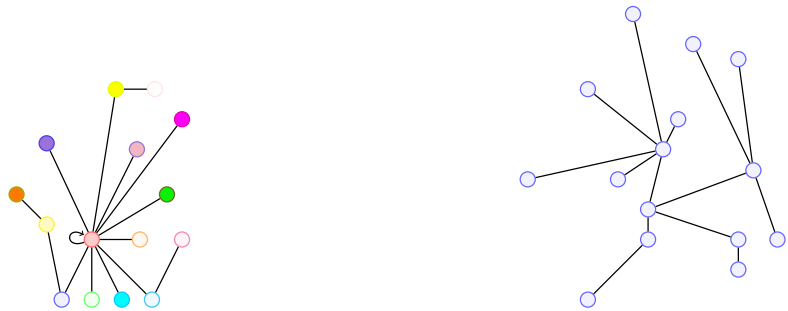
\begin{figure}[hbt]
\centering
\scalebox{0.4}{
\begin{tikzpicture}
[onenode/.style={circle, draw=red!60, fill=red!20, very thick, minimum size=5mm},
twonode/.style={circle, draw=green!60, fill=green!5, very thick, minimum size=5mm},
threenode/.style={circle, draw=blue!60, fill=blue!5, very thick, minimum size=5mm},
fournode/.style={circle, draw=orange!60, fill=orange!5, very thick, minimum size=5mm},
fivenode/.style={circle, draw=cyan!60, fill=cyan!5, very thick, minimum size=5mm},
sixnode/.style={circle, draw=magenta!60, fill=magenta!5, very thick, minimum size=5mm},
node/.style={circle, draw=cyan!60, fill=cyan!5, very thick, minimum size=5mm},
sevennode/.style={circle, draw=yellow!90, fill=yellow!30, very thick, minimum size=5mm},
eightnode/.style={circle, draw=blue!70!pink, fill= blue!40!pink, very thick, minimum size=5mm},
ninenode/.style={circle, draw=blue!40!pink, fill=blue!5!pink, very thick, minimum size=5mm},
tennode/.style={circle, draw=green!10!yellow, fill=green!2!yellow, very thick, minimum size=5mm},
elevennode/.style={circle, draw=pink!40, fill=pink!5, very thick, minimum size=5mm},
twelvenode/.style={circle, draw=purple!40!magenta, fill=purple!5!magenta, very thick, minimum size=5mm},
thirteennode/.style={circle, draw=green!40!orange, fill=purple!5!orange, very thick, minimum size=5mm},
fourteennode/.style={circle, draw=green!40!purple, fill=purple!6!green, very thick, minimum size=5mm},
fifteennode/.style={circle, draw=blue!20!cyan, fill=blue!2!cyan, very thick, minimum size=5mm},
]
\node[onenode] (1) at (0,4){};
\node[twonode] (2) at (0,2.0){};
\node[threenode] (3) at (-1.0,2.0){};
\node[fournode] (4) at (1.6,4.0){};
\node[fivenode] (5) at (2.0,2.0){}; 
\node[sixnode] (6) at (3,4) {};
\node[sevennode] (7) at (-1.5,4.5){};
\node[eightnode] (8) at (-1.5,7.2){}; 
\node[ninenode] (9) at (1.5,7.0){};
\node[tennode] (10) at (0.8,9.0){};
\node[elevennode] (11) at (2.1,9.0) {};
\node[twelvenode] (12) at (3.0,8.0){};
\node[thirteennode] (13) at (-2.5,5.5){};
\node[fourteennode] (14) at (2.5,5.5){};
\node[fifteennode] (15) at (1.0,2.0){};
    \path[-,very thick, black]
    (1) edge [out=200, in=150, loop] node{} (1)
    (1) edge (2)
    (1) edge  (3)
    (1) edge (4)
    (1) edge (5)
    (1) edge (8)
    (1) edge (9)
    (1) edge (10)
    (1) edge (12)
    (1) edge (14)
    (1) edge (15)
    (3) edge (7)

    (5) edge (6)
    (7) edge  (13)
    (10) edge (11);
\end{tikzpicture} 
}
\hspace{4.0cm}
\scalebox{0.4}{
\begin{tikzpicture}
[bluenode/.style={circle, draw=blue!60, fill=blue!5, very thick, minimum size=5mm},]
\node[bluenode] (1) at (6.0,4.0){};
\node[bluenode] (2) at (8.5,2.0){};
\node[bluenode] (3) at (9.0,3.0){};
\node[bluenode] (4) at (8.0,0.0){};
\node[bluenode] (5) at (11.0,-1.0){}; 
\node[bluenode] (11) at (6.0,-3.0) {};
\node[bluenode] (7) at (4.0,1.0){};
\node[bluenode] (8) at (11.0,-2.0){}; 
\node[bluenode] (9) at (7.5,6.5){};
\node[bluenode] (10) at (7.0,1.0){};
\node[bluenode] (6) at (8.0,-1.0) {};
\node[bluenode] (12) at (11.5,1.3){};
\node[bluenode] (13) at (12.3,-1.0){};
\node[bluenode] (14) at (11.0,5.0){};
\node[bluenode] (15) at (9.5,5.5){};
\tikzset{every loop/.style={}}
    \path[-,very thick, black]
    (1) edge(2)
    (2) edge (3)
    (2) edge (4)
    (2) edge (7)
    (2) edge (9)
    (2) edge (10)
    (4) edge (5)
    (4) edge (6)
    (4) edge (12)
    (5) edge (8)
    (6) edge (11)
    (12) edge (13)
    (12) edge (14)
    (12) edge (15);
    \end{tikzpicture}
}
 \caption{ On the left-hand side  is a $15$-vertex network generated via the draws from a P\'{o}lya urn with expanding colors and $\Delta_{t}=5$ for all $t \geq 1$ and on the right-hand side is a network with $15$ vertices generated via Barab\'{a}si-Albert model. For our model, unlike the Barab\'{a}si-Albert model, each vertex is represented by a distinct color which corresponds to a color type of balls in the P\'{o}lya urn at that time instant. Furthermore, the extra reinforcement parameter $\Delta_{t}$ in our model provides versatility in the level of preferential attachment. The parameter $\Delta_{t}=5$ in our model enables the central vertex of the graph on the left-hand side to obtain a higher degree ($11$ in this case) as compared to the right-hand side Barab\'{a}si-Albert network in which the highest degree achieved is $6$.}
\label{fig:networks}
\end{figure}
\begin{figure}[htp]
\begin{subfigure}[b]{0.49\textwidth}
\includegraphics[width=\textwidth]{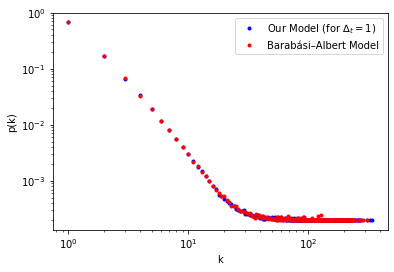}
\caption{$\Delta_{t}=1$}
\label{fig:degree_one}
\end{subfigure}
\hfill
\begin{subfigure}[b]{0.49\textwidth}
\includegraphics[width=\textwidth]{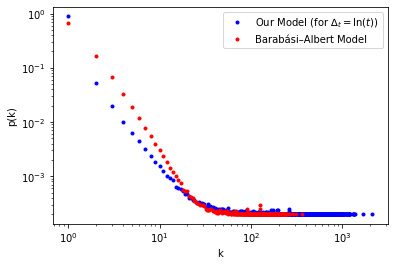}
\caption{$\Delta_{t}=\ln(t)$}
\label{fig:degree_ln}
\end{subfigure}
\caption{Degree distributions of networks generated until time $5000$ (averaged over $250$ simulations) for the Barab\'{a}si-Albert model and our model with $\Delta_{t}=1$ and $\Delta_{t}=\ln(t)$. In \textbf{(a)} the degree distributions of both models are nearly identical. While, in \textbf{(b)} the degree distributions are quite different.}
\label{fig:degree}
\end{figure}

In Figures \ref{fig:degree} and \ref{fig:degree_f(t)_g(t)}, we plot the degree distribution of networks generated for four different choices of $\Delta_{t}$: $1, \ln(t),f(t), g(t)$, where the functions $f(t)$ and $g(t)$ are defined in \eqref{eqn:fandg}. We observe the deviation of the degree distribution of the graphs generated via our model for the above mentioned choices of $\Delta_{t}$ from the degree distribution of the Barab\'{a}si-Albert network which follows the relation $p(k)\sim k^{-3}$, where $p(k)$ is the probability of randomly choosing a vertex of degree $k$ in the network. The similarity between the Barab\'{a}si-Albert algorithm and our model with $\Delta_{t}=1$ (as observed in Figure \ref{fig:degree} (a)) can be represented in the following way:
\begin{align*}
P(&\textrm{incoming vertex at time $t$ connects to vertex corresponding to color $c_{j}$} ) \\
&= \textrm{ratio of $c_{j}$ color balls in the expanding color P\'{o}lya urn at time $t-1$}\\
&= \frac{\textrm{degree of vertex corresponding to color $c_{j}$ in graph $\mathcal{G}_{t-1}$}}{\textrm{sum of degrees in graph $\mathcal{G}_{t-1}$}}\\
& = P(\textrm{incoming vertex connects to the vertex added at time $j-1$}\\
&\hspace{1.0cm} \textrm{in a standard Barab\'{a}si-Albert network}). 
\end{align*}
Hence in the case where $ \Delta_t=1$, the mechanisms of both models for iteratively constructing new vertices and edges are equivalent.
However the initialization of our model is different from the Barab\'{a}si-Albert model. In our model, the initial graph has only one vertex with a self-loop, whereas in the Barab\'{a}si-Albert model, the initial graph can potentially have more than one vertex equipped with an edge set and no self-loops. Even though the initialization of both models are different, the equivalent procedures for adding new vertices and edges between our model with $\Delta_{t}=1$ and the standard Barab\'{a}si-Albert model ensure that the generated graphs via both models will show similar properties for sufficiently large $t$. While it is an intricate task to analytically solve for the {\em asymptotic} degree distribution and other properties of our model due to the fact that its reinforcement dynamics is much more involved than that of the Barab\'{a}si-Albert model, such an investigation is a worthwhile future direction.

In Figure \ref{fig:degree} (b), we observe that the degree distribution of our model with $\Delta_{t}=\ln(t)$ significantly differs from the degree distribution of Barab\'{a}si-Albert model. The former has a lower probability of obtaining lower degree vertices (degree range $10^{0}-10^{1}$) as compared to the latter but has a slightly higher probability of gaining moderate degree vertices (degree range $50-150$). Additionally, the maximum degree attained in the case of $\Delta_{t}=\ln(t)$ for our model in Figure \ref{fig:degree} is much higher ($\sim 10^{3}$ as compared to only $200$ in Barab\'{a}si-Albert network).

\begin{figure}[htp]
\begin{subfigure}[b]{0.49\textwidth}
\includegraphics[width=\textwidth]{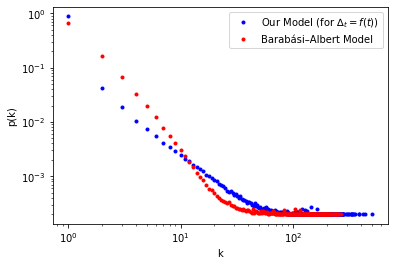}
\caption{$\Delta_{t}=f(t)$}
\end{subfigure}
\hfill
\begin{subfigure}[b]{0.49\textwidth}
\includegraphics[width=\textwidth]{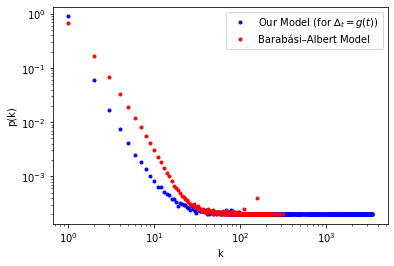}
\caption{$\Delta_{t}=g(t)$}
\end{subfigure}
\caption{Degree distribution of the Barab\'{a}si-Albert model and our model generated for two different choices of $\Delta_{t}$, \textbf{(a)}$\Delta_{t} = f(t)$ and \textbf{(b)}
$\Delta_{t}=g(t)$, where $f(t)$ and $g(t)$ are defined in \eqref{eqn:fandg}. Both plots are averaged over $250$ simulations, where each simulation is a generation of a $5000$-vertex graph.}
\label{fig:degree_f(t)_g(t)}
\end{figure}

In Figure \ref{fig:degree_f(t)_g(t)}, we present two more cases in which the degree distributions differ substantially between our model and the Barab\'{a}si-Albert model. More specifically, we generate our network for $\Delta_{t}$ being an increasing step function $f(t)$ and a decreasing continuous function $g(t)$ given by:
\begin{align}\label{eqn:fandg}
f(t) = \begin{cases}
1 &\textrm{for} \quad 0\leq t< 1000\\
10 &\textrm{for} \quad 1000\leq t<2500\\
100 &\textrm{for} \quad 2500\leq t \leq 5000,
\end{cases} \hspace{1cm} g(t) =  \begin{cases}
10 \hspace{1.0cm} &\textrm{for} \quad 0\leq t\leq 1000\\
\frac{10^{4}}{t} &\textrm{for} \quad 1000\leq t \leq 2000\\
5 &\textrm{for} \quad 2000\leq t \leq 3000\\
\frac{15\times 10^{3}}{t} &\textrm{for} \quad 3000\leq t \leq 4000\\
3.75 &\textrm{for} \quad 4000\leq t \leq 5000.
\end{cases}
\end{align}
We remark from Figure \ref{fig:degree_f(t)_g(t)} that the maximum degree attained in both figures (generated via $\Delta_{t}=f(t)$ and $\Delta_{t}=g(t)$) is higher than in the Barab\'{a}si-Albert model. Under the function $g(t)$, the constant value of $\Delta_{t}=10$ up until time $1000$ allows for the ball colors corresponding to older vertices (birth time $\leq 1000$) to get accumulated in large numbers. Thereafter, the value of $\Delta_{t}$ continuously decreases and therefore the ball colors corresponding to younger vertices (birth time $\geq 1000$) do not grow in large numbers. This gap between number of balls corresponding to younger and older vertices is significant enough for most of the older vertices to achieve high degrees with the younger vertices not acquiring much connections. Hence most of the vertices either get a very high degree or a very low degree resulting in less vertices with moderate  degrees ($10^{1}-10^{2}$) compared to the Barab\'{a}si-Albert network. On the contrary, the increasing step function $f$ allows for a wider range of ball colors (corresponding to vertices born between time $1000$ to $2500$) to grow their proportion, thus producing more vertices in the moderate degree range $(10^{1}-10^{2})$ compared to the Barab\'{a}si-Albert network.


In the next set of simulations, we compare  \eqref{eqn:avg_birthtime} for our network generated with $\Delta_{t}=1,\ln(t),f(t)$ and $g(t)$ and the Barab\'{a}si-Albert network.
\begin{figure}[htp]
\begin{subfigure}[b]{0.49\textwidth}
\includegraphics[width=\textwidth]{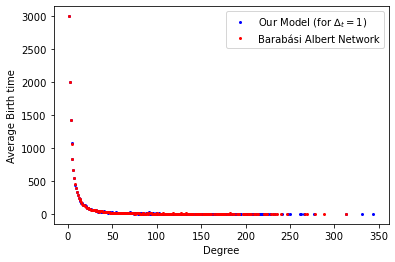}
\caption{$\Delta_{t}=1$}
\end{subfigure}
\hfill
\begin{subfigure}[b]{0.49\textwidth}
\includegraphics[width=\textwidth]{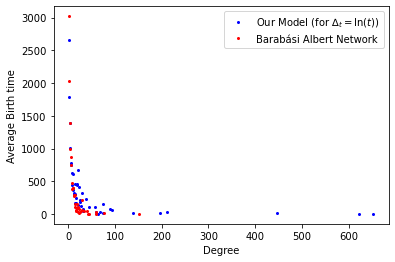}
\caption{$\Delta_{t}=\ln(t)$}
\end{subfigure}
\begin{subfigure}[b]{0.49\textwidth}
\vspace{0.5cm}
\includegraphics[width=\textwidth]{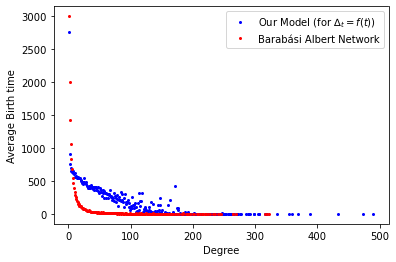}
\caption{$\Delta_{t}=f(t)$}
\end{subfigure}
\hfill
\begin{subfigure}[b]{0.49\textwidth}
\includegraphics[width=\textwidth]{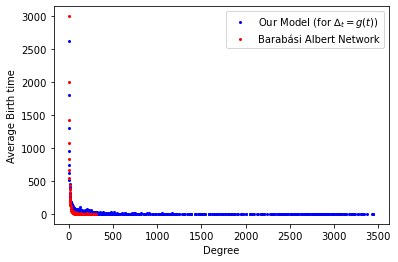}
\caption{$\Delta_{t}=g(t)$}
\end{subfigure}
\caption{Vertices average birth time versus degree for our model using \textbf{(a)} $\Delta_{t}=1$; \textbf{(b)}$\Delta_{t}=\ln(t)$; \textbf{(c)}$\Delta_{t}=f(t)$ and \textbf{(d)}$\Delta_{t}=g(t)$ (where the functions $f(t)$ and $g(t)$ are given in \eqref{eqn:fandg}) and for the Barab\'{a}si-Albert network. All networks are generated for $5000$ time steps and the average of 250 such networks is plotted.}
\label{fig:avg_birth_time}
\end{figure}
Figure \ref{fig:avg_birth_time}(a) demonstrates that in both the Barab\'{a}si-Albert model and our model for $\Delta_{t}=1$ vertices of the same degree are born at similar times. The stark similarities between the Barab\'{a}si-Albert model and our model for $\Delta_{t}=1$ in Figures \ref{fig:degree} and \ref{fig:avg_birth_time} strongly suggest that both networks have very similar structures; however a rigorous analytic study is required to confirm if our model with $\Delta_{t}=1$ is stochastically equivalent to the standard Barab\'{a}si-Albert model. In Figure \ref{fig:avg_birth_time}(b), we observe that for our model with $\Delta_{t}=\ln(t)$, the network shows slightly more connectivity in the vertices which are born at similar times as compared to the Barab\'{a}si-Albert network. This effect of same age vertices showing more connectivity when compared to Barab\'{a}si-Albert networks is much more amplified when our model uses $\Delta(t)=f(t)$ as shown in Figure \ref{fig:avg_birth_time}(c). Both cases provide a much richer algorithm for generating real-life networks in which the ``rich gets richer'' phenomenon needs to be dampened as it allows the more recently born vertices to get more connectivity. In contrast, Figure \ref{fig:avg_birth_time}(d) shows an amplification of the ``rich gets richer'' phenomenon when compared to the Barab\'{a}si-Albert network as the first two richest vertices achieve a significantly higher degree (around $3500$) compared to all other vertices. The rest of the vertices have very similar connectivity as that of the Barab\'{a}si-Albert network. The choice $\Delta_{t}=g(t)$ of the reinforcement parameter in our model provides an algorithm to generate graphs which are spatially similar to the Barab\'{a}si-Albert network but demonstrate a higher effect of preferential attachment. 
\section{Conclusion}\label{sec:conclusions}
We constructed a preferential attachment type graph using the draw vectors of a P\'{o}lya urn with growing colors and a tunable time-varying reinforcement parameter $\Delta_{t}$. The network obtained is essentially equivalent to the Barab\'{a}si-Albert network for the case $\Delta_{t}=1$ and  gains a significant amount of versatility when $\Delta_{t}$ is a time-varying function.  We analysed the draw vectors of the underlying stochastic process and derived the probability distribution of a random variable  counting the draws of a particular color of this P\'{o}lya process. This random variable can be written in terms of the degree of the vertex in the constructed preferential attachment network corresponding to this color. We provided simulation evidences for the structural similarities between our model and the Barab\'{a}si-Albert model for $\Delta_{t}=1$ and also justified the richness and versatility of our model for general $\Delta_{t}$. Future directions include devising a preferential attachment graph generating algorithm using a P\'{o}lya urn with finitely many colors, formulating strategies for choosing the best possible $\Delta_{t}$ for a randomly growing graph, incorporating removal of edges in the graph through removal of balls from the P\'{o}lya urn and setting an upper limit on the maximum degree a vertex can achieve.   
\section*{Acknowledgement}
This work was funded in part by the Natural Sciences and Engineering Research Council (NSERC) of Canada.

\bibliographystyle{IEEEtran}
\bibliography{example}
\end{spacing}

\end{document}